\newtheorem{thm}{Theorem}[section]
\newtheorem{cor}[thm]{Corollary}
\newtheorem{lem}[thm]{Lemma}
\newtheorem{prop}[thm]{Proposition}
\theoremstyle{definition}
\theoremstyle{remark}
\newtheorem*{rem}{Remark}
\newtheorem*{remarks}{Remarks}
\newtheorem*{example}{Example}
\newtheorem*{examples}{Examples}
\newcommand\leftidx[3]{%
  {\vphantom{#2}}\hspace{.08em}#1\hspace{-.08em}#2#3%
}
\newcommand{\overbar}[1]{\mkern 1.5mu\overline{\mkern-1.5mu#1\mkern-1.5mu}\mkern 1.5mu}
\newcommand{\divides}{\bigm|}
\newcommand{\ndivides}{%
  \mathrel{\mkern.5mu 
    \ooalign{\hidewidth$\big|$\hidewidth\cr$\nmid$\cr}%
  }%
}
\author{Sel\c{c}uk Kayacan\thanks{This work is supported by the T{\"U}B\.{I}TAK 2214/A Grant Program: 1059B141401085.}}
\title{Dominating Sets in Intersection Graphs\\ of Finite Groups}
\date{}
\begin{document}

\maketitle

\small

\begin{center}
  TÜBİTAK BİLGEM\\
  41470 Gebze, Kocaeli, Turkey. \\
  {\it e-mail:} \href{mailto:kayacan.selcuk@gmail.com}{kayacan.selcuk@gmail.com}
\end{center}

     \begin{abstract}
     Let $G$ be a group. The intersection graph $\Gamma(G)$ of $G$ is an undirected graph without loops and multiple edges defined as follows: the vertex set is the set of all proper non-trivial subgroups of $G$, and there is an edge between two distinct vertices $H$ and $K$ if and only if $H\cap K \neq 1$ where $1$ denotes the trivial subgroup of $G$. In this paper we studied the dominating sets in intersection graphs of finite groups. It turns out a subset of the vertex set is a dominating set if and only if the union of the corresponding subgroups contains the union of all minimal subgroups. We classified abelian groups by their domination number and find upper bounds for some specific classes of groups. Subgroup intersection is related with Burnside rings. We introduce the notion of intersection graph of a $G$-set (somewhat generalizing the ordinary definition of intersection graph of a group) and establish a general upper bound for the domination number of $\Gamma(G)$ in terms of subgroups satisfying a certain property in Burnside ring. Intersection graph of $G$ is the $1$-skeleton of the simplicial complex whose faces are the sets of proper subgroups which intersect non-trivially. We call this simplicial complex intersection complex of $G$ and show that it shares the same homotopy type with the order complex of proper non-trivial subgroups of $G$. We also prove that if domination number of $\Gamma(G)$ is $1$, then intersection complex of $G$ is contractible.

  \smallskip
  \noindent 2010 {\it Mathematics Subject Classification.} Primary:
  20D99; Secondary: 05C69, 05C25, 20C05, 55U10.

  \smallskip
  \noindent Keywords: Finite groups; subgroup; intersection graph;
  dominating sets; domination number; Burnside ring; order complex

\end{abstract}

     \section{Introduction}
\label{sec:intro}

Let $\mathcal{F}$ be the set of proper subobjects of an object with an algebraic structure. In \cite{Yaraneri2013} the \emph{intersection graph of $\mathcal{F}$} is defined in the following way: there is a vertex for each subobject in $\mathcal{F}$ other than the zero object, where the zero object is the object having a unique endomorphism, and there is an edge between two vertices whenever the intersection of the subobjects representing the vertices is not the zero object. In particular, if $\mathcal{F}$ is the set of proper subgroups of a group $G$, then the zero object is the trivial subgroup. The intersection graph of (the proper subgroups of) $G$ will be denoted by $\Gamma(G)$.

Intersection graphs first defined for semigroups by Bos\'ak in \cite{Bosak1963}. Let $S$ be a semigroup. The intersection graph of the semigroup $S$ is defined in the following way: the vertex set is the set of proper subsemigroups of $S$ and there is an edge between two distinct vertices $A$ and $B$ if and only if $A\cap B\neq \varnothing$. Interestingly, this definition is not in the scope of the abstract generalization given in the preceding paragraph. Afterwards, in \cite{CsakanyPollak1969} Cs\'ak\'any and Poll\'ak adapted this definition into groups in the usual way. Still there are analogous definitions. For example, in \cite{ChakrabartyEtAl2009} authors studied the intersection graphs of ideals of a ring. In particular, they determine the values of $n$ for which the intersection graph of the ideals of $\mathbb{Z}_n$ is connected, complete, bipartite, planar or has a cycle. For the corresponding literature the reader may also refer to \cite{JafariRad2010,JafariRad2011,LaisonQing2010,Shen2010,Zelinka1975} and some of the references therein.

As is well-known subgroups of a group $G$ form a lattice $L(G)$ ordered by set inclusion. Some of the structural properties of a group may be inferred by studying its subgroup lattice. Intersection graphs of groups are natural objects and are intimately related with subgroup lattices. Actually, given the subgroup lattice one can recover the intersection graph but not vice versa in general. Intuitively, by passing from $L(G)$ to $\Gamma(G)$ a certain amount of knowledge should be lost. In \cite{KayacanEtAl2015b} authors show that finite abelian groups can almost be distinguished by their intersection graphs. The same result was proven previously for subgroup lattices in \cite{Baer1939} (see also \cite[Corollary~1.2.8]{Schmidt1994}). Therefore, rather surprisingly subgroup lattices and intersection graphs holds the same amount of information on the subgroup structure if the group is abelian.

By defining intersection graphs we attach a graph to a group, like in the case of Cayley graphs. So, there are two natural directions we may follow. First, we may study the graph theoretical properties of intersection graphs by means of group theoretical arguments. This is straightforward. For example we may ask for which groups their intersection graphs are planar \cite{KayacanEtAl2015a,AhmediEtAl2016} or connected \cite{Lucido2003,Shen2010,Kayacan2018}. And second, we may study the algebraic properties of groups by means of combinatorial arguments applied to the intersection graphs though this part seems to require more ingenuity.

In this paper, we study the dominating sets in intersection graphs. A \emph{dominating set} $D$ of a graph $\Gamma$ is a subset of the vertex set $V$ such that any vertex not in $D$ is adjacent to some vertex in $D$. The \emph{domination number} $\gamma(\Gamma)$ of $\Gamma$ is the smallest cardinal of the dominating sets for $\Gamma$. Vizing's conjecture from 1968 asserts that for any two graphs $\Gamma$ and $\Gamma'$ the product $\gamma(\Gamma)\gamma(\Gamma')$ is at most the domination number of the Cartesian product of $\Gamma$ and $\Gamma'$. Despite the efforts of many mathematicians this conjecture is still open (see \cite{BresarEtAl2012}). Given a graph $\Gamma$ and an integer $n$ the dominating set problem asks whether is there a dominating vertex set of size at most $n$. It is a classical instance of a $NP$-complete decision problem. There are many papers on domination theory covering algorithmic aspects as well. More can be found on this subject for example in \cite{BresarEtAl2007,FominEtAl2005,HaynesEtAl1997} and references therein.

It is easy to observe that a subset $\mathcal{D}$ of the vertex set $V(G)$ of the intersection graph of the group $G$ is a dominating set if and only if for any minimal subgroup $A$ of $G$ there exists a $H\in\mathcal{D}$ such that $A\leq H$. This in turn implies that $\mathcal{D}$ is a dominating set if and only if the union of the subgroups in $\mathcal{D}$ contains all minimal subgroups of $G$ as a subset, or equivalently, if and only if the union of the subgroups in $\mathcal{D}$ contains all elements of $G$ of prime order. In particular, the set of minimal subgroups and the set of maximal subgroups are dominating sets. We denote the domination number of $\Gamma(G)$ simply by $\gamma(G)$ and call this invariant of the group the \emph{domination number of $G$}. Let $G$ be a finite group. We shall note that a dominating set $\mathcal{D}$ of minimal size might be assumed to consist of maximal subgroups as any proper subgroup of a finite group is contained in a maximal subgroup. In particular, there exists a dominating set $\mathcal{D}$ such that each element of $\mathcal{D}$ is a maximal subgroup and the cardinality of $\mathcal{D}$ is $\gamma(G)$.

In \cite{Cohn1994}, Cohn defined a group as an $n$-sum group if it can be written as the union of $n$ of its proper subgroups and of no smaller number. Let $G$ be an $n$-sum group. In the light of the previous paragraphs it is clear that $\gamma(G)\leq n$. Notice that any non-trivial finite group can be written as the union of its proper subgroups unless it is cyclic, hence it is reasonable to call the cyclic group $C_m$ of order $m$ an $\aleph_0$-sum group. On the other hand a cyclic group of order $p^s$ with $p$ a prime contains a unique minimal subgroup, therefore $\gamma(C_{p^s})=1$ provided $s>1$. The intersection graph $\Gamma(G)$ is the empty graph whenever $G$ is trivial or isomorphic to a cyclic group of prime order and in such case we adopt the convention $\gamma(G)=\aleph_0$. The reason for this will be justified when we prove Lemma~\ref{lem:quotient}. Let $G$ be an $n$-sum group. It can be easily observed that $\gamma(G)=n$ if $G$ is isomorphic to the one of the following groups
$$ C_p,\qquad C_p\times C_p,\qquad C_p\times C_q,\qquad C_p\rtimes C_q, $$
where $p$ and $q$ are some distinct prime numbers. The reader may refer to for example \cite{Cohn1994,DetomiEtAl2008,GaronziEtAl2015} for the literature on $n$-sum groups.

Classifying groups by their domination number is a difficult problem. Even the determination of groups with domination number $1$ seems to be intractable. In Sections~\ref{sec:abelian},~\ref{sec:solvable}, and~\ref{sec:permutation} we determine upper bounds for the domination number of particular classes of groups. For example, abelian groups can be classified by their domination number (see Theorem~\ref{thm:abelian}) and the domination number of a supersolvable group is at most $p+1$ for some prime divisor $p$ of its order (see Proposition~\ref{prop:super}). It turns out symmetric groups forms an interesting class in our context. In Section~\ref{sec:permutation} we find some upper bounds for the symmetric groups by their degree (see Theorem~\ref{thm:Sn}) and show that those bounds are applicable also for the primitive subgroups containing an odd permutation (see Corollary~\ref{cor:primitive}).

In Section~\ref{sec:G-sets} we introduce \emph{intersection graphs of $G$-sets}. This notion in a sense generalizes the ordinary definition of intersection graphs of groups (see Proposition~\ref{prop:sigma}). Subgroup intersection is related with the multiplication operator of Burnside rings and the ultimate aim in this section is to incorporate the Burnside ring context into our discussion. We show that the domination number $\gamma(G)$ can be bounded by the sum of the indices of the normalizers of some subgroups in $G$ satisfying a certain property as a collection in the Burnside ring (see Proposition~\ref{prop:index}).

There is an extensive literature on combinatorial objects associated with algebraic structures. An alternative path in this direction is to introduce order complexes of subgroups and thereby rendering the use of topological terms possible (see, for example \cite{Brown1975,Quillen1978,HawkesEtAl1989,Smith2011}). Similar work using subgroup lattices, frames, coset posets, and quandles has also appeared in literature (see, \cite{ShareshianEtAl2012,ShareshianEtAl2016,Fumagalli2009,HeckenbergerEtAl2015}).

A natural construction in which a simplicial complex $K(G)$ is associated with a group $G$ is the following: the underlying set of $K(G)$ is the vertex set of $\Gamma(G)$ and for each vertex $H$ in $\Gamma(G)$ there is an associated simplex $\sigma_H$ in $K(G)$ which is defined as the set of proper subgroups of $G$ containing $H$. Clearly, the common face of $\sigma_H$ and $\sigma_K$ is $\sigma_{\langle H,K\rangle}$. Alternatively, $K(G)$ is the simplicial complex whose faces are the sets of proper subgroups of $G$ which intersect non-trivially. Observe that $\Gamma(G)$ is the $1$-skeleton of $K(G)$. By an argument due to Volkmar Welker, $K(G)$ shares the same homotopy type with the order complex of proper non-trivial subgroups of $G$ (see Proposition~\ref{prop:homotopy}).  In Section~\ref{sec:complex} we study the \emph{intersection complex $K(G)$} and prove that if domination number of $\Gamma(G)$ is $1$, then intersection complex of $G$ is contractible (see Corollary~\ref{cor:dom1}). 

\section{Preliminaries}
\label{sec:pre}

First we recall some of the basic facts from standard group theory.

\begin{remarks}
\mbox{}
  \begin{enumerate} 
  \item \label{rem1} (Product Formula, see {\cite[Theorem~2.20]{Rotman1995}}) 
    $|XY||X\cap Y|=|X||Y|$ for any two subgroups $X$ and $Y$ of a finite group.
  \item \label{rem2} (Sylow's Theorem, see {\cite[Theorem~4.12]{Rotman1995}})
    \begin{enumerate}[(i)]
      \item If $P$ is a Sylow $p$-subgroup of a finite group $G$, then all Sylow $p$-subgroups of $G$ are conjugate to $P$.
      \item If there are $r$ Sylow $p$-subgroups, then $r$ is a divisor of $|G|$ and $r \equiv 1 \pmod{p}$.
    \end{enumerate}
  \item \label{rem3} (Hall's Theorem, see {\cite[Theorem~4.1]{Gorenstein1980}}) If $G$ is a finite solvable group, then any $\pi$-subgroup is contained in a Hall $\pi$-subgroup. Moreover, any two Hall $\pi$-subgroups are conjugate.
  \item \label{rem4} (Correspondence Theorem, see {\cite[Theorem~2.28]{Rotman1995}}) Let $N\trianglelefteq G$ and let $\nu\colon G\to G/N$ be the canonical morphism. Then $S\mapsto \nu(S)=S/N$ is a bijection from the family of all those subgroups $S$ of $G$ which contain $N$ to the family of all the subgroups of $G/N$.
  \item \label{rem5} (Dedekind's Lemma, see {\cite[Exercise~2.49]{Rotman1995}}) Let $H,K,$ and $L$ be subgroups of $G$ with $H\leq L$. Then $HK\cap L=H(K\cap L)$.  
  \end{enumerate}
\end{remarks}

Let $G$ be a finite group. We denote by $N_G$ the subgroup of $G$ generated by its minimal subgroups. Obviously, $N_G$ is a characteristic subgroup. If $G\cong C_p$ with $p$ a prime, then we take $N_G=G$. Adapting the module theoretical parlance we might call a subgroup of a group \emph{essential} provided that it contains all minimal subgroups. Thus $N_G$ is the smallest essential subgroup. Notice that if $G$ is abelian and $G\ncong C_p$, then $N_G$ is the \emph{socle} of $G$.

\begin{lem}\label{lem:dom1}
  For a finite group $G$, the following statements are equivalent.
  \begin{enumerate}[(i)]
  \item The domination number of $\Gamma(G)$ is $1$.
  \item $N_G$ is a proper (normal) subgroup of $G$.
  \item $G$ is a (non-split) extension of $N_G$ by a non-trivial group $H$.
  \end{enumerate}
\end{lem}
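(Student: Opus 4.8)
The plan is to reduce everything to the domination criterion recalled in the introduction: a single vertex $H$ dominates $\Gamma(G)$ exactly when $H$ contains every minimal subgroup of $G$, that is, when $N_G\leq H$. Since $N_G$ is by definition the smallest essential (characteristic) subgroup, I would treat (i) $\Leftrightarrow$ (ii) as the substantive equivalence and (ii) $\Leftrightarrow$ (iii) as an unwinding of the definition of extension together with a short non-splitting argument. Throughout I would keep track of the degenerate cases encoded in the conventions $N_G=G$ and $\gamma(G)=\aleph_0$ for $G$ trivial or $G\cong C_p$, so that both (i) and (ii) correctly fail there.

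For (i) $\Rightarrow$ (ii): assuming $\gamma(G)=1$, fix a single dominating vertex $H$. As a vertex, $H$ is a proper non-trivial subgroup, and by the criterion it contains all minimal subgroups, so $N_G\leq H<G$; hence $N_G$ is proper. Normality is then free, since $N_G$ is characteristic. For the converse (ii) $\Rightarrow$ (i): if $N_G$ is proper then the conventions $N_G=G$ for the trivial group and for $C_p$ force $G$ to be non-trivial and not isomorphic to $C_p$, so $N_G$ is itself a proper non-trivial subgroup and in particular a vertex of $\Gamma(G)$; since it contains every minimal subgroup, $\{N_G\}$ is a dominating set and $\gamma(G)\leq 1$. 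As $\Gamma(G)$ is non-empty we also have $\gamma(G)\geq 1$, giving $\gamma(G)=1$.

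For (ii) $\Leftrightarrow$ (iii): because $N_G$ is normal, $G$ fits in an extension $1\to N_G\to G\to H\to 1$ with $H=G/N_G$, and $H$ is non-trivial precisely when $N_G$ is proper, which is exactly (ii). It remains to justify the parenthetical, namely that such an extension is always non-split. Here I would argue by contradiction: a splitting would supply a complement $K$ with $G=N_G K$ and $N_G\cap K=1$, where $K\cong H\neq 1$. But any non-trivial finite group contains a minimal subgroup, so $K$ contains a minimal subgroup $A$ of $G$; then $A\leq N_G$ by the definition of $N_G$, whence $A\leq N_G\cap K=1$, a contradiction.

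I expect the main obstacle to be bookkeeping rather than depth: the three assertions are genuinely equivalent through the single condition that $N_G$ is proper, and the only real content is the non-splitting argument above together with the careful handling of the degenerate groups ($G$ trivial or $G\cong C_p$), where the graph is empty and the stated conventions must be invoked for the equivalences to hold.
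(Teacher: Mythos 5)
Your proof is correct and follows essentially the same route as the paper: a single dominating vertex must contain every minimal subgroup and hence $N_G$, and a splitting of the extension would produce a complement containing a minimal subgroup of $G$ that meets $N_G$ trivially, a contradiction. The only difference is organizational (you prove (i)$\Leftrightarrow$(ii) and (ii)$\Leftrightarrow$(iii) separately, while the paper runs the cycle (i)$\Rightarrow$(ii)$\Rightarrow$(iii)$\Rightarrow$(i)), and your explicit handling of the degenerate cases $G$ trivial or $G\cong C_p$ is a welcome extra care.
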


\begin{proof}
  (i)\!$\implies$\!(ii): There must be a proper subgroup $H$ of $G$ such that $H\cap K\neq 1$ for any non-trivial subgroup $K$ of $G$. In particular, $H$ intersects non-trivially, and hence contains, any minimal subgroup in $N_G$. That is, $H\geq N_G$. However, $H$ is a proper subgroup of $G$, so is $N_G$.

(ii)\!$\implies$\!(iii): Since $N_G$ is a proper normal subgroup, $G$ is an extension of $N_G$ by a non-trivial group $U$. Notice that this extension cannot split, as otherwise, there would be a subgroup of $G$ isomorphic to $H$ which intersects $N_G$ trivially. However, this contradicts with the definition of $N_G$.

(iii)\!$\implies$\!(i): Clearly $N_G$ is a proper subgroup intersecting any subgroup non-trivially; hence $\{N_G\}$ is a dominating set for $\Gamma(G)$.
\end{proof}

\begin{cor}
  If $G$ is a finite simple group, then $\gamma(G)>1$. \qed
\end{cor}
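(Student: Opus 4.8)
The plan is to deduce the corollary directly from Lemma~\ref{lem:dom1}, exploiting the fact that the subgroup $N_G$ generated by the minimal subgroups is characteristic. First I would invoke the equivalence (i)$\iff$(ii) of Lemma~\ref{lem:dom1}, which reduces the assertion $\gamma(G)>1$ to showing that $N_G$ is \emph{not} a proper subgroup of $G$. Note that the domination number is always at least $1$ (for a non-empty graph any dominating set contains a vertex, and for the empty graph we have adopted the convention $\gamma(G)=\aleph_0$), so it suffices to establish $\gamma(G)\neq 1$.

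Next, since $N_G$ is characteristic in $G$ it is in particular normal, and the simplicity of $G$ forces $N_G\in\{1,G\}$. The only point requiring an argument is to rule out $N_G=1$. This follows from Cauchy's theorem: every non-trivial finite group contains an element of prime order, hence a minimal subgroup, so the subgroup generated by the minimal subgroups is non-trivial. Therefore $N_G\neq 1$, leaving $N_G=G$, which is not proper. By Lemma~\ref{lem:dom1} this gives $\gamma(G)\neq 1$.

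Finally I would check that both ways of realizing $\gamma(G)\neq 1$ actually yield $\gamma(G)>1$. If $G$ is non-abelian simple, then $\Gamma(G)$ is non-empty (Cauchy supplies proper non-trivial subgroups of prime order, which are proper since $G$ is not cyclic of prime order), so $\gamma(G)$ is a positive integer and $\gamma(G)\geq 2$; if instead $G\cong C_p$, then $\Gamma(G)$ is empty and $\gamma(G)=\aleph_0$ by convention. In either case $\gamma(G)>1$. I do not expect a genuine obstacle here: the entire content is packaged into Lemma~\ref{lem:dom1} together with the characteristicity of $N_G$. The only subtlety worth flagging is that one must conclude $\gamma(G)>1$ rather than merely $\gamma(G)\neq 1$, which is precisely why the convention $\gamma(C_p)=\aleph_0$ (and the positivity of the domination number in the non-empty case) must be recorded.
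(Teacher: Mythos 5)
Your proof is correct and follows exactly the route the paper intends: the corollary is stated with an immediate \qed because it follows from Lemma~\ref{lem:dom1} together with the observation that $N_G$ is a non-trivial normal subgroup of a simple group and hence equals $G$. Your additional care with the $G\cong C_p$ case and the convention $\gamma(C_p)=\aleph_0$ is a sensible bit of bookkeeping that the paper leaves implicit.
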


In general, there is \emph{no} relation between the domination number of a group and its subgroups. As a simple example consider the dihedral group $D_8=\langle a,b\divides  a^4=b^2=1, bab=a^3\rangle$ of order $8$. It has three maximal subgroups $\langle a^2,b\rangle$, $\langle a^2, ab\rangle$, $\langle a\rangle$ and the first two of them together dominate $\Gamma(D_8)$. Moreover, as $D_8=\langle ab, b\rangle$, we have $\gamma(D_8)=2$ by Lemma~\ref{lem:dom1}. On the other hand, $\langle a^2,b\rangle\cong C_2\times C_2$ and since its intersection graph consists of isolated vertices, we have $\gamma(\langle a^2,b\rangle)=3$; whereas $\gamma(\langle a\rangle) = 1$ as it is isomorphic to the cyclic group of order four. However, by imposing some conditions on the subgroup $H$ of $G$, it is easy to prove that $\gamma(H)\leq \gamma(G)$ holds.

\begin{lem}\label{lem:subgroup}
  Let $H$ be a subgroup of $G$ and $\mathcal{D}$ be a dominating set of $\Gamma(G)$. Then $\gamma(H)\leq |\mathcal{D}|$ provided that none of the elements of $\mathcal{D}$ contains $H$. In particular, $\gamma(H)\leq\gamma(G)$ if there is such a dominating set with cardinality $\gamma(G)$. 
\end{lem}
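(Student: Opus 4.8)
The plan is to manufacture a dominating set of $\Gamma(H)$ by restricting each subgroup in $\mathcal{D}$ to $H$. Concretely, I would work with the family
$$ \mathcal{D}' = \{\, D\cap H : D\in\mathcal{D},\ D\cap H\neq 1 \,\}, $$
and argue that $\mathcal{D}'$ is a dominating set of $\Gamma(H)$ whose cardinality does not exceed $|\mathcal{D}|$, which gives $\gamma(H)\leq|\mathcal{D}'|\leq|\mathcal{D}|$ at once.

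First I would check that every element of $\mathcal{D}'$ is a genuine vertex of $\Gamma(H)$, that is, a proper non-trivial subgroup of $H$. Non-triviality is built into the definition of $\mathcal{D}'$, while properness is exactly where the hypothesis enters: since no $D\in\mathcal{D}$ contains $H$, we have $D\cap H\subsetneq H$ for every $D$. Passing from $\mathcal{D}$ to $\mathcal{D}'$ can only decrease the count, since we discard those $D$ with $D\cap H=1$ and distinct members of $\mathcal{D}$ may produce the same intersection; hence $|\mathcal{D}'|\leq|\mathcal{D}|$ is immediate.

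Next I would verify the domination property via the criterion recalled in the introduction: a collection of vertices dominates $\Gamma(H)$ precisely when the union of the corresponding subgroups contains every minimal subgroup of $H$. So let $A$ be a minimal subgroup of $H$. Then $A$ has prime order, so it is also a minimal subgroup of $G$, and because $\mathcal{D}$ dominates $\Gamma(G)$ there is some $D\in\mathcal{D}$ with $A\leq D$. Since $A\leq H$ as well, we obtain $A\leq D\cap H$; in particular $D\cap H\neq 1$, so $D\cap H\in\mathcal{D}'$ and it contains $A$. Thus $\mathcal{D}'$ covers every minimal subgroup of $H$ and is a dominating set. The ``in particular'' clause then follows by taking $\mathcal{D}$ to be a dominating set realizing $\gamma(G)$ and enjoying the stated property.

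The only real subtlety — and the step I would be most careful about — is the treatment of the intersections $D\cap H$ that happen to be trivial: these are not vertices of $\Gamma(H)$ and must be excluded, yet the argument shows they are never needed, since each minimal subgroup of $H$ is already captured by a \emph{non-trivial} intersection. It is also worth remarking that the hypothesis quietly disposes of the degenerate cases: if $H$ were trivial then every $D$ would contain $H$, and if $H$ were cyclic of prime order then its unique minimal subgroup $H$ would itself have to lie in some $D$; in either situation no dominating set could avoid containing $H$, so the statement is vacuous there and the convention $\gamma(H)=\aleph_0$ is never actually tested.
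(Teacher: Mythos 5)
Your proposal is correct and is essentially identical to the paper's proof, which consists of the single observation that $\mathcal{D}_H:=\{D\cap H\,\colon\, D\in\mathcal{D}\}$ dominates $\Gamma(H)$ when no member of $\mathcal{D}$ contains $H$; you merely supply the routine verifications (properness of $D\cap H$, discarding the trivial intersections) that the paper leaves implicit. The only caveat is that your verification via minimal subgroups of prime order tacitly assumes $H$ is finite (or at least that every non-trivial subgroup of $H$ contains a minimal one), whereas the lemma is stated for an arbitrary group $G$; in general one should argue directly that a vertex $K$ of $\Gamma(H)$ not lying in $\mathcal{D}_H$ satisfies $K\cap(D\cap H)=K\cap D\neq 1$ for some $D\in\mathcal{D}$, which needs no minimal subgroups at all.
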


\begin{proof}
  Observe that if none of the elements of $\mathcal{D}$ contains $H$, then the set $\mathcal{D}_H:=\{D\cap H\,\colon\, D\in\mathcal{D}\}$ is a dominating set for $\Gamma(H)$.
\end{proof}

The following result will be very useful in our later arguments.

\begin{lem}\label{lem:quotient}
  Let $N$ be a normal subgroup of $G$. If $G/N\ncong C_p$, then $\gamma(G)\leq \gamma(G/N)$. Moreover, the condition $G/N\ncong C_p$ can be removed if $G$ is a finite group.
\end{lem}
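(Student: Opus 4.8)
The plan is to lift a minimum dominating set from $\Gamma(G/N)$ up to $\Gamma(G)$ along the canonical projection $\nu\colon G\to G/N$. The two tools I would rely on are the Correspondence Theorem (Remark~\ref{rem4}), which gives a bijection between subgroups of $G/N$ and subgroups of $G$ lying above $N$, and the domination criterion recorded in the introduction: a family of proper non-trivial subgroups dominates $\Gamma(G)$ precisely when the union of its members contains every minimal subgroup of $G$ (equivalently, every element of prime order). The whole proof is then an exercise in transporting the covering of minimal subgroups across $\nu$.

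For the main case I would assume $G/N$ is non-trivial and $G/N\ncong C_p$, so that $\Gamma(G/N)$ is non-empty and admits a dominating set. Choose $\overline{\mathcal D}=\{H_1/N,H_2/N,\dots\}$ realizing $\gamma(G/N)$, where, by the Correspondence Theorem, each $H_i/N$ is the image of a unique subgroup $H_i$ with $N\leq H_i\leq G$. Since every $H_i/N$ is proper and non-trivial in $G/N$, one has $N\lneq H_i\lneq G$; hence each $H_i$ is a proper non-trivial subgroup of $G$, i.e.\ a vertex of $\Gamma(G)$, and distinct $H_i/N$ lift to distinct $H_i$. Put $\mathcal D=\{H_i\}$, so $|\mathcal D|=\gamma(G/N)$. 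To see that $\mathcal D$ dominates, I would verify that $\bigcup_i H_i$ contains every minimal subgroup $A=\langle x\rangle$ of $G$ (with $x$ of prime order $p$): if $x\in N$ then $x$ lies in every $H_i$, since each contains $N$; and if $x\notin N$ then $xN$ has order $p$ in $G/N$, so $\langle xN\rangle$ is a minimal subgroup of $G/N$, whence $\langle xN\rangle\leq H_i/N$ for some $i$ because $\overline{\mathcal D}$ dominates, and therefore $x\in\nu^{-1}(H_i/N)=H_i$. In either case $A$ sits inside some member of $\mathcal D$, so $\mathcal D$ is a dominating set and $\gamma(G)\leq|\mathcal D|=\gamma(G/N)$.

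It remains to treat the degenerate quotients and thereby justify both the hypothesis and the convention. If $G/N$ is trivial or $G/N\cong C_p$, then $\Gamma(G/N)$ is the empty graph and, by convention, $\gamma(G/N)=\aleph_0$. When $G$ is finite its maximal subgroups form a finite dominating set of $\Gamma(G)$, so $\gamma(G)$ is finite and $\gamma(G)\leq\aleph_0=\gamma(G/N)$ holds automatically; this is exactly the point at which setting $\gamma(C_p)=\aleph_0$ pays off, allowing the hypothesis $G/N\ncong C_p$ to be dropped in the finite setting. For a general (possibly infinite) $G$ the exclusion is genuinely necessary, since such a $G$ may have domination number exceeding $\aleph_0$, in which case $\gamma(G)\leq\aleph_0=\gamma(C_p)$ could fail.

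The hard part will not be the lifting construction, which is routine once the Correspondence Theorem is invoked, but rather two bookkeeping points: first, confirming that the lifted family \emph{dominates} rather than merely covering minimal subgroups (clean for finite $G$ via the recorded criterion, but something one must be careful about when $G$ is infinite and not every vertex need contain a minimal subgroup), and second, correctly accounting for the $C_p$ convention, since that is precisely where the finite-versus-general dichotomy in the statement lives.
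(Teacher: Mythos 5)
Your lifting construction is exactly the paper's: pull a minimum dominating set of $\Gamma(G/N)$ back through the Correspondence Theorem to a family of subgroups lying strictly between $N$ and $G$, and your handling of the degenerate quotients via the convention $\gamma(C_p)=\aleph_0$ matches the paper's second assertion. For finite $G$ your argument is complete and correct.

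The gap is the one you flag yourself and then leave open. The first assertion of the lemma carries no finiteness hypothesis (finiteness enters only in the second sentence), so the verification that the lifted family dominates cannot rest on the criterion ``a family dominates if and only if its union contains every minimal subgroup.'' That criterion presupposes that every non-trivial subgroup of $G$ contains a minimal subgroup, which fails for any $G$ with elements of infinite order (already for $G=\mathbb{Z}$, the subgroup $2\mathbb{Z}$ contains no minimal subgroup): a vertex $H$ of $\Gamma(G)$ need not contain any element of prime order, and your argument says nothing about such $H$. The paper closes this by arguing vertex-by-vertex. Given a proper non-trivial $H\leq G$: if $H\cap N\neq 1$, every lifted subgroup meets $H$ non-trivially, since each contains $N$; if $H\cap N=1$, then $NH/N\neq 1$, so domination in the quotient yields $W\in\mathcal{D}$ with $Y:=(NH/N)\cap W\neq 1$, and writing $\overline{Y}$ for the preimage of $Y$ in $G$, Dedekind's Lemma gives $\overline{Y}=N(\overline{Y}\cap H)$ with $\overline{Y}>N$, forcing $\overline{Y}\cap H\neq 1$ and hence $\overline{W}\cap H\neq 1$ for the lift $\overline{W}$ of $W$. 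Replacing your covering-of-minimal-subgroups check with this argument (which also works verbatim in the finite case) proves the lemma in its stated generality.
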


\begin{proof}
Let $\mathcal{D}$ be a dominating set for $\Gamma(G/N)$ and set $\overbar{\mathcal{D}}:=\{N<\overbar{W}<G\,\colon\, \overbar{W}/N\in\mathcal{D}\}$. By the Correspondence Theorem, $|\overbar{\mathcal{D}}|=|\mathcal{D}|$. We want to show that $\overbar{\mathcal{D}}$ is a dominating set for $\Gamma(G)$. Let $H$ be a proper non-trivial subgroup of $G$. If $H\cap N\neq 1$, then any element of $\overbar{\mathcal{D}}$ intersects $H$ non-trivially. Suppose that $H\cap N=1$. Take $W\in \mathcal{D}$ such that $(NH/N)\cap W:=Y\neq 1$. If $Y=NH/N$, then clearly $\overbar{W}$ contains $H$ where $\overbar{W}\in \overbar{\mathcal{D}}$ such that $\overbar{W}/N=W$. Suppose $Y\neq NH/N$ and let $\overbar{Y}/N=Y$. Obviously $\overbar{Y}<\overbar{W}$. Moreover, By the Dedekind's Lemma $\overbar{Y}=NK$, where $K:=\overbar{Y}\cap H$. That is $\overbar{W}\cap H\geq K\neq 1$. Since $H$ is an arbitrary subgroup, we see that $\overbar{\mathcal{D}}$ dominates $\Gamma(G)$. This proves the first part. The second part follows from the convention $\gamma(C_p)=\aleph_0$.
\end{proof}

\begin{cor}
  Let $G\cong H \times K$ be a finite group. Then $\gamma(G)\leq \mathrm{min}\{\gamma(H),\gamma(K)\}$. \qed
\end{cor}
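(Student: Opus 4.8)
The plan is to deduce this immediately from Lemma~\ref{lem:quotient}. The key observation is that in a direct product $G\cong H\times K$ each factor appears simultaneously as a normal subgroup and as a quotient: the subgroups $H\times\{1\}$ and $\{1\}\times K$ are both normal in $G$, with $G/(H\times\{1\})\cong K$ and $G/(\{1\}\times K)\cong H$.

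First I would set $N=\{1\}\times K$, so that $G/N\cong H$. Since $G$ is finite, the second part of Lemma~\ref{lem:quotient} applies without the restriction $G/N\ncong C_p$, giving $\gamma(G)\leq\gamma(G/N)=\gamma(H)$. Repeating the argument with $N=H\times\{1\}$ yields $\gamma(G)\leq\gamma(K)$. Combining the two inequalities gives $\gamma(G)\leq\min\{\gamma(H),\gamma(K)\}$, as desired.

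There is no genuine obstacle here; the only point that deserves a word is the role of the finiteness hypothesis. Were we to rely on the unconditional form of Lemma~\ref{lem:quotient}, we would need $G/N\ncong C_p$, which fails exactly when one factor is cyclic of prime order. Invoking the finite case removes this caveat, and in any event the use of $\min$ means that should one quotient be isomorphic to $C_p$ (so that the corresponding bound degenerates to the vacuous $\gamma(G)\leq\aleph_0$ under the convention $\gamma(C_p)=\aleph_0$), the inequality coming from the other factor still carries the content of the statement.
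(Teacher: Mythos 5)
Your proof is correct and is exactly the argument the paper intends: the corollary is left as an immediate consequence of Lemma~\ref{lem:quotient} applied to the normal subgroups $H\times\{1\}$ and $\{1\}\times K$, with the finiteness clause of that lemma (or the convention $\gamma(C_p)=\aleph_0$) disposing of the $C_p$ caveat just as you describe.
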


Let $\mathcal{S}$ be a subset of the vertex set $V(G)$ of $\Gamma(G)$. It is natural to define the intersection graph $\Gamma(\mathcal{S})$ of the vertex set $\mathcal{S}$ in the following way. There is an edge between two vertices $H,K\in\mathcal{S}$ if and only if $H\cap K\in\mathcal{S}$. We denote the domination number of $\Gamma(\mathcal{S})$ simply by $\gamma(\mathcal{S})$. Let $V(G)_{>N}$ be the set of proper subgroups of $G$ containing the normal subgroup $N$ of $G$ strictly. By the Correspondence Theorem $\Gamma(V(G)_{>N})\cong\Gamma(G/N)$, hence $\gamma(V(G)_{>N})=\gamma(G/N)$. Moreover, by the proof of Lemma~\ref{lem:quotient}, a (non-empty) subset of $V(G)_{>N}$ dominates $\Gamma(G)$ provided that it dominates $\Gamma(V(G)_{>N})$. Let $\varphi_N\colon V(G)\cup\{1,G\}\to V(G)_{>N}\cup\{N,G\}$ be the map taking $H$ to $NH$; and let $\mathcal{D}$ be a dominating set of $\Gamma(G)$ consisting of maximal subgroups. Notice that even if $\varphi_N(\mathcal{D})\subseteq V(G)_{>N}$, the image set $\varphi_N(\mathcal{D})$ might not be a dominating set for $\Gamma(V(G)_{>N})$.

Let $\mathcal{S}_p(G)$ be the set of all proper non-trivial $p$-subgroups of $G$. Observe that if $G$ is a $p$-group, $\Gamma(G)$ and $\Gamma(\mathcal{S}_p(G))$ coincides. It is also true in general that there is an edge between two vertices of $\Gamma(\mathcal{S}_p(G))$ if and only if there is an edge between the corresponding vertices in $\Gamma(G)$.

 \begin{lem}
   Let $G$ be a finite group and let $N$ be a normal $p$-subgroup of $G$ such that the set $\mathcal{S}_p(G)_{>N}$ is non-empty. Then $$\gamma(\mathcal{S}_p(G))\leq\gamma(\mathcal{S}_p(G)_{>N}).$$
 \end{lem}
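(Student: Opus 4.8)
The plan is to imitate the proof of Lemma~\ref{lem:quotient} almost verbatim, passing from the full intersection graph to the subgraph on $p$-subgroups and using crucially that $N$ is a $p$-group. First I would fix a dominating set $\mathcal{D}$ of $\Gamma(\mathcal{S}_p(G)_{>N})$ with $|\mathcal{D}|=\gamma(\mathcal{S}_p(G)_{>N})$. Since $\mathcal{S}_p(G)_{>N}\subseteq\mathcal{S}_p(G)$, the set $\mathcal{D}$ already consists of vertices of $\Gamma(\mathcal{S}_p(G))$, so it suffices to show that $\mathcal{D}$ dominates this larger graph; this immediately yields $\gamma(\mathcal{S}_p(G))\leq|\mathcal{D}|$. (We may assume $N\neq1$, since otherwise $\mathcal{S}_p(G)_{>N}=\mathcal{S}_p(G)$ and the two graphs coincide.) The structural fact that makes the $p$-subgroup hypothesis on $N$ indispensable is that for every $H\in\mathcal{S}_p(G)$ the product $NH$ is again a $p$-subgroup: normality of $N$ makes $NH$ a subgroup, and $|NH|=|N||H|/|N\cap H|$ is a power of $p$. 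Consequently $NH$ is an eligible vertex of $\Gamma(\mathcal{S}_p(G)_{>N})$ whenever it is proper, which is precisely what allows the Dedekind-style argument to run inside the class of $p$-subgroups.

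Next I would take an arbitrary vertex $H\in\mathcal{S}_p(G)$ with $H\notin\mathcal{D}$ and argue by cases. If $H\cap N\neq1$, then because every $D\in\mathcal{D}$ contains $N$ we get $D\cap H\supseteq H\cap N\neq1$, so $H$ is adjacent to every element of the (non-empty, since $\mathcal{S}_p(G)_{>N}\neq\varnothing$) set $\mathcal{D}$. If $H\cap N=1$, I would pass to $\overbar{H}:=NH$, which strictly contains $N$ and satisfies $\overbar{H}\cap H=H$. When $\overbar{H}\in\mathcal{D}$ it dominates $H$ outright. Otherwise the domination property of $\mathcal{D}$ in $\Gamma(\mathcal{S}_p(G)_{>N})$ furnishes a $D\in\mathcal{D}$ for which $Y:=D\cap\overbar{H}$ strictly contains $N$; Dedekind's Lemma applied to $N\leq Y\leq NH$ then gives $Y=NH\cap Y=N(H\cap Y)$, and with $K:=H\cap Y=H\cap D$ the strict inclusion $NK=Y\supsetneq N$ forces $K\neq1$, i.e.\ $H\cap D\neq1$. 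Either way $H$ is dominated.

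The step that will need the most care is the degenerate case $\overbar{H}=NH=G$: then $NH$ is not a proper subgroup, hence not a vertex of $\Gamma(\mathcal{S}_p(G)_{>N})$, and the domination hypothesis cannot be invoked for it directly. Here I would observe that $G=NH$ with $N$ and $H$ both $p$-groups forces $G$ to be a $p$-group, and that the Dedekind computation nevertheless closes the gap: for any $D\in\mathcal{D}$ one has $Y:=D\cap\overbar{H}=D\cap G=D\supsetneq N$, so $D=N(H\cap D)$ and again $H\cap D\neq1$. (One could alternatively note that for a $p$-group $G$ one has $\mathcal{S}_p(G)=V(G)$ and $\mathcal{S}_p(G)_{>N}=V(G)_{>N}$, so in this case the assertion is exactly Lemma~\ref{lem:quotient}.) Assembling the cases shows that every vertex of $\Gamma(\mathcal{S}_p(G))$ lying outside $\mathcal{D}$ is adjacent to some member of $\mathcal{D}$, so $\mathcal{D}$ is a dominating set and $\gamma(\mathcal{S}_p(G))\leq|\mathcal{D}|=\gamma(\mathcal{S}_p(G)_{>N})$, as claimed.
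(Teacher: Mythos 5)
Your proof is correct and follows essentially the same route as the paper: both arguments show that a dominating set $\mathcal{D}$ of $\Gamma(\mathcal{S}_p(G)_{>N})$ already dominates $\Gamma(\mathcal{S}_p(G))$, splitting on whether $H\cap N$ is trivial and then using Dedekind's Lemma (equivalently the Product Formula) on $N(H\cap D)$ inside $NH$. The only difference is cosmetic --- the paper invokes domination via a minimal element of the poset $\mathcal{S}_p(G)_{>N}$ lying under $NH$ rather than via $NH$ itself --- and your explicit treatment of the degenerate case $NH=G$ is a point the paper's proof passes over in silence, so it is a welcome addition rather than a deviation.
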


 \begin{proof}
   Let $\mathcal{D}$ be a dominating set of $\mathcal{S}_p(G)_{>N}$. As in the proof of Lemma~\ref{lem:quotient}, we want to show that $\mathcal{D}$ dominates $\Gamma(\mathcal{S}_p(G))$. Let $H\in\mathcal{S}_p(G)$. If $H\cap N\neq 1$, then clearly $H\cap W\neq 1$ for any $W\in\mathcal{D}$. Suppose $H\cap N=1$. Then $NH\in\mathcal{S}_p(G)_{>N}\cup\{G\}$. Let $Y$ be a minimal subgroup in $\mathcal{S}_p(G)_{>N}$ contained by $NH$; and let $W\in\mathcal{D}$ contains $Y$. If $Y=NH$, then $H<W$. Suppose $Y\neq NH$. Since $N<Y<NH$ and $N\cap H=1$, $Y\cap H\in \mathcal{S}_p(G)$ by the Product Formula; hence, $W\cap H\in \mathcal{S}_p(G)$ as well.

\end{proof}

\section{Abelian groups}
\label{sec:abelian}

In this section we classify finite abelian groups by their domination number. Recall that the exponent of a group $G$, denoted by $\mathrm{exp}(G)$, is the least common multiple of the orders of elements of $G$. Let $G$ be a finite group and consider the function $f$ from the set of non-empty subsets of $G$ to the set of positive integers taking $X\subseteq G$ to the lowest common multiple of the orders of elements of $X$. Clearly, the image of the whole group $G$ is $\mathrm{exp}(G)$. By a celebrated theorem of Frobenius if $X$ is a maximal subset of $G$ satisfying the condition $x^k=1$ for all $x\in X$ with $k$ is a fixed integer dividing $|G|$, then $k$ divides $|X|$. Let $g$ be the function taking the integer $k$ to the maximal subset $X_k:=\{x\in G\,\colon\, x^k=1\}$. Then, $f$ and $g$ define a Galois connection between the poset of non-empty subsets of $G$ and the poset of positive integers ordered by divisibility relation. In general such a maximal subset may not be a subgroup. For example, if the Sylow $p$-subgroup $P$ of $G$ is not a normal subgroup of $G$, then the union of conjugates of $P$ cannot be a subgroup. However, if $G$ is an abelian group then for any integer $k$ the subset $X_k$ is actually a subgroup.

For a finite group $G$ we denote by $\mathrm{sfp}(G)$ the square-free part of $|G|$, i.e. $\mathrm{sfp}(G)$ is the product of distinct primes dividing $|G|$. Notice that a collection of proper subgroups dominates to the intersection graph if and only if their union contains $X_t$, where $t=\mathrm{sfp}(G)$.
 
\begin{thm}\label{thm:abelian}
  Let $G$ be a finite abelian group with proper non-trivial subgroups. Then
  \begin{enumerate}
  \item $\gamma(G)=1$ if and only if $\mathrm{sfp}(G)<\mathrm{exp}(G)$.
  \item $\gamma(G)=2$ if and only if $\mathrm{sfp}(G)=\mathrm{exp}(G)$ and $\mathrm{sfp}(G)$ is not a prime number.
  \item $\gamma(G)=p+1$ if and only if $p=\mathrm{sfp}(G)=\mathrm{exp}(G)$ is a prime number.
  \end{enumerate}
\end{thm}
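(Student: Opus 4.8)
The plan is to reduce everything to the combinatorial criterion recorded in the introduction---a set of proper subgroups dominates $\Gamma(G)$ if and only if their union contains every element of prime order---together with the identification, valid since $G\ncong C_p$, of $N_G$ with the socle $X_t$ where $t=\mathrm{sfp}(G)$. First I would note the elementary fact that $\mathrm{sfp}(G)$ divides $\mathrm{exp}(G)$ (every prime dividing $|G|$ divides $\mathrm{exp}(G)$ by Cauchy), so $\mathrm{sfp}(G)\le\mathrm{exp}(G)$ with equality precisely when $x^{t}=1$ for every $x\in G$, i.e.\ when $X_t=G$, i.e.\ when $N_G=G$. Since the three hypotheses partition all possibilities and the asserted values $1,2,p+1$ are pairwise distinct (as $p\ge 2$ forces $p+1\ge 3$), it suffices to prove each forward implication; the converses are then automatic.

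Part (1) is immediate from Lemma~\ref{lem:dom1}: we have $\gamma(G)=1$ if and only if $N_G$ is proper, and by the remark above $N_G$ is proper exactly when $X_t\neq G$, i.e.\ when $\mathrm{sfp}(G)<\mathrm{exp}(G)$. For parts (2) and (3) I would assume $\mathrm{sfp}(G)=\mathrm{exp}(G)$, so that each Sylow subgroup $G_{p_i}$ of $G\cong\prod_{i=1}^{k}C_{p_i}^{a_i}$ is elementary abelian and $N_G=G$; Lemma~\ref{lem:dom1} then already yields $\gamma(G)\ge 2$.

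For part (2), when $t$ is composite we have $k\ge 2$, so I would partition the primes into two nonempty blocks and take $H_1,H_2$ to be the corresponding products of Sylow subgroups. Each $H_i$ is a proper nontrivial subgroup, and since every element of prime order lies in some $G_{p_i}$ and hence in $H_1$ or $H_2$, the pair $\{H_1,H_2\}$ dominates $\Gamma(G)$; combined with $\gamma(G)\ge 2$ this gives $\gamma(G)=2$.

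For part (3), here $t=p$ is prime and $G\cong C_p^{a}$ with $a\ge 2$ (the group has proper nontrivial subgroups), so every nonidentity element has order $p$ and dominating amounts to covering $G\setminus\{1\}$ by proper subspaces of $\mathbb{F}_p^{a}$. For the upper bound I would fix a surjective linear map $\mathbb{F}_p^{a}\to\mathbb{F}_p^{2}$ and pull back the $p+1$ lines through the origin to obtain $p+1$ proper hyperplanes covering $G$ (equivalently, invoke Lemma~\ref{lem:quotient} with the base case $\gamma(C_p\times C_p)=p+1$, whose intersection graph is $p+1$ isolated vertices). For the lower bound I would use that a minimum dominating set may be taken to consist of maximal subgroups, i.e.\ hyperplanes, each containing $p^{a-1}-1$ nonzero vectors; covering all $p^{a}-1$ nonzero vectors with $m$ of them forces $m(p^{a-1}-1)\ge p^{a}-1$, whereas $m\le p$ yields at most $p(p^{a-1}-1)=p^{a}-p<p^{a}-1$, a contradiction, so $m\ge p+1$ and $\gamma(G)=p+1$. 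The one genuinely non-formal step is this final counting estimate; all the rest is bookkeeping with the socle and the Sylow decomposition.
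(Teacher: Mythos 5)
Your argument is correct, and parts (1) and (2) coincide with the paper's own proof: part (1) is exactly the observation that $N_G=X_t$ together with Lemma~\ref{lem:dom1}, and your two Sylow-block subgroups in part (2) play the role of the paper's $H=\{h\,\colon\,h^{t/p}=1\}$ and $K=\{k\,\colon\,k^{t/q}=1\}$. The only genuine divergence is in part (3). There the paper argues with orthogonal complements: it exhibits $p+1$ explicit hyperplanes $K_i=H_i^{\perp}$ covering $G$, and for the lower bound passes to the dual vectors $A_j=M_j^{\perp}$ of a putative smaller dominating set and derives a contradiction after a linear-independence reduction and a change of basis. You instead obtain the upper bound by pulling back the $p+1$ lines of $\mathbb{F}_p^2$ (equivalently, by Lemma~\ref{lem:quotient} applied to a quotient isomorphic to $C_p\times C_p$), and the lower bound by counting: $m$ hyperplanes cover at most $m(p^{a-1}-1)$ nonzero vectors, which for $m\le p$ is $p^a-p<p^a-1$. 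This is precisely the alternative argument the paper itself records in the remark immediately following the theorem; it is arguably the cleaner route, since it sidesteps the somewhat delicate step in the main proof where the dual vectors are assumed linearly independent and normalized by a change of basis. Both approaches rest on the same reductions (a minimum dominating set may be taken to consist of maximal subgroups, and dominating is equivalent to covering all elements of prime order), so your proof is equivalent in substance and complete.
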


\begin{proof}
Let $t$ be the square-free part of $|G|$ and $m$ be the exponent of $G$.

\emph{Assertion 1.} Observe that $N_G=\{x\in G\,\colon\,  x^t=1\}$. By Lemma~\ref{lem:dom1}, $\gamma(G)=1$ if and only if $N_G$ is a proper subgroup which is the case if and only if $t<m$. 

For Assertion 2 and Assertion 3 it is enough to prove the sufficiency conditions, as $2\neq t+1$ for any prime number $t$.

\emph{Assertion 2.} Suppose that $t=m$ and $t$ is not a prime number. Then there exist two distinct primes $p$ and $q$ dividing $t$. Clearly, the subgroups $H=\{h\in G\,\colon\, h^{t/p}=1\}$ and $K=\{k\in G\,\colon\, k^{t/p}=1\}$ dominates $\Gamma(G)$. As there is no dominating set of cardinality one (by virtue of Assertion~1), $\gamma(G)=2$.

\emph{Assertion 3.} Suppose that $t=m$ and $t$ is a prime number. We consider $G$ as a vector space over the field $\mathbb{F}_t$ of $t$ elements of dimension $d\geq 2$ and fix a basis for this vector space in canonical way. Let $H_i=\langle h_i\rangle$, $1\leq i\leq t+1$ where $h_i=(1,0,\dots,0,i)$ for $i\leq t$ and $h_{t+1}=(0,\dots,0,1)$. Then $\{K_i:=H_i^{\perp}\,\colon\, 1\leq i\leq t+1\}$ is a dominating set for $\Gamma(G)$. To see this first observe that for any $g\in G$ there exists an $h_i$ such that $g\cdot h_i=0$, i.e. $g\in K_i$ for some $1\leq i\leq t+1$. Next, suppose that there exist a dominating set $\mathcal{D}=\{M_1,\dots,M_s\}$ with cardinality $s<t+1$. We want to derive a contradiction. Without loss of generality elements of $\mathcal{D}$ can be taken as maximal subgroups. Let $A_j=M_j^{\perp}$, $1\leq j\leq s$. Suppose $A_j$ are generated by linearly independent vectors (if not, we may take a maximal subset of $\{A_1,\dots,A_s\}$ with this property and apply the same arguments). By a change of basis if necessary, we may take $A_j=H_j$. However, that means $g=(1,0,\dots,0)$ is not contained in any $M_i\in\mathcal{D}$ as $g\cdot h_j\neq 0$ for $1\leq j\leq s$. This contradiction completes the proof.
\end{proof}

\begin{rem}
We may prove the first and second assertions by regarding $G$ as a $\mathbb{Z}$-module (compare with \cite[Theorem~4.4]{Yaraneri2013}). Also for the third assertion we may argue as follows. By Lemma~\ref{lem:quotient}, $\gamma(G)\leq t+1$ as $\gamma(C_p\times C_p)=t+1$ and the rank of $G$, say $r$, is greater than or equal to two. On the other hand, any non-identity element of $G$ belongs to exactly one minimal subgroup and there are $t^r-1$ of them. Any maximal subgroup of $G$ contains $t^{r-1}-1$ non-identity elements and $t$ maximal subgroups may cover at most $t^r-t$ elements; hence, there is no dominating set for $G$ of size $< t+1$.
\end{rem}

By the fundamental theorem of finite abelian groups any finite abelian group can be written as the direct product of cyclic groups of prime power orders, thus we may restate Theorem~\ref{thm:abelian} as
$$
\gamma(C_{p_1^{\alpha_1}}\times\cdots\times C_{p_1^{\alpha_k}})=
\begin{cases}
  1 & \text{if } \alpha_i\geq 2 \text{ for some } 1\leq i\leq k \\
  2 & \text{if } \alpha_i=1 \text{ for all } 1\leq i\leq k \\ 
    & \text{ and } p_{j_1}\neq p_{j_2} \text{ for some } j_1\neq j_2 \\
p+1 & \text{if } \alpha_i=1 \text{ and } p_i=p \text{ for all } 1\leq i\leq k
\end{cases}
$$
where $p_1,\ldots,p_k$ are prime numbers and  $\alpha_1,\ldots,\alpha_k$ are positive integers with $k=1$ implies $\alpha_1>1$.

\section{Solvable groups}
\label{sec:solvable}

Though finite abelian groups can be classified by their domination number, it seems this is not possible in general. Nevertheless, we may use the structural results to find upper bounds for the domination number of groups belonging to larger families. 

\begin{prop}
  Let $G$ be a finite nilpotent group and $p$ be a prime number. Suppose $G\ncong C_p$. Then 
\begin{enumerate}
  \item $\gamma(G)\leq p+1$ if $G$ is a $p$-group.
  \item $\gamma(G)\leq 2$ if $G$ is not a $p$-group.
\end{enumerate}  
\end{prop}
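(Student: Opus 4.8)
The plan is to reduce both cases to the abelian classification of Theorem~\ref{thm:abelian} by passing to the Frattini quotient $G/\Phi(G)$ and invoking the finite-group form of Lemma~\ref{lem:quotient}, which gives $\gamma(G)\leq\gamma(G/N)$ for \emph{every} normal subgroup $N$. The structural input I would use is that a finite nilpotent group is the direct product of its Sylow subgroups, $G\cong P_1\times\cdots\times P_k$, that $\Phi(G)\cong\prod_i\Phi(P_i)$ for such a direct product of coprime factors, and that for a $p$-group $P$ the quotient $P/\Phi(P)$ is elementary abelian of rank $d(P)$, the minimal number of generators of $P$. Consequently $G/\Phi(G)\cong\prod_i\bigl(P_i/\Phi(P_i)\bigr)$ is abelian with $\mathrm{exp}=\mathrm{sfp}=p_1\cdots p_k$, which is exactly the shape to which Theorem~\ref{thm:abelian} applies.

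For part (1) I would write $G/\Phi(G)\cong C_p^{\,d}$ with $d=d(G)$. If $d\geq 2$, then $\mathrm{sfp}=\mathrm{exp}=p$ is prime, so Theorem~\ref{thm:abelian}(3) gives $\gamma(G/\Phi(G))=p+1$ and hence $\gamma(G)\leq p+1$ by Lemma~\ref{lem:quotient}. If $d=1$, then $G$ is cyclic, so $G\cong C_{p^s}$ with $s\geq 2$ (using $G\ncong C_p$); such a group has a unique minimal subgroup and therefore $\gamma(G)=1\leq p+1$. I would emphasize that this last sub-case must be handled by hand rather than through the quotient, since the Frattini quotient is then $C_p$ and the bound $\gamma(G)\leq\gamma(C_p)=\aleph_0$ coming from the convention is vacuous.

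For part (2), since $G$ is not a $p$-group there are $k\geq 2$ distinct primes, so $G/\Phi(G)$ is abelian with $\mathrm{sfp}=\mathrm{exp}=p_1\cdots p_k$ a non-prime; Theorem~\ref{thm:abelian}(2) then yields $\gamma(G/\Phi(G))=2$ and Lemma~\ref{lem:quotient} gives $\gamma(G)\leq 2$. Alternatively, and more elementarily, I would exhibit a dominating pair directly: with the Sylow decomposition $G\cong P_1\times\cdots\times P_k$, set $H=P_1\times\cdots\times P_{k-1}$ and $K=P_k$, both proper and non-trivial. Every minimal subgroup of $G$ has prime order $p_i$, hence lies in the factor $P_i$ and so in $H$ (if $i<k$) or in $K$ (if $i=k$); thus $H\cup K$ meets every minimal subgroup and $\{H,K\}$ dominates $\Gamma(G)$.

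I do not expect a genuine obstacle once Theorem~\ref{thm:abelian} and Lemma~\ref{lem:quotient} are available; the only points demanding care are the structural bookkeeping for $G/\Phi(G)$ (that it is abelian with square-free part equal to its exponent, so that the hypotheses $\mathrm{sfp}=\mathrm{exp}$ of Theorem~\ref{thm:abelian} are met with the correct prime content), and the isolated cyclic $p$-group case in part (1), where the quotient argument degenerates and the elementary observation $\gamma(C_{p^s})=1$ must be used instead.
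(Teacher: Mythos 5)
Your proof is correct, and for part (2) your direct argument (covering the minimal subgroups, each of which lies in its Sylow subgroup, by the two complementary factors of the Sylow decomposition) is exactly the paper's argument. For part (1) you take a slightly different but equally valid route: the paper picks a normal subgroup $N$ of index $p^2$, so that $G/N$ is either $C_{p^2}$ (domination number $1$) or $C_p\times C_p$ (domination number $p+1$), and applies Lemma~\ref{lem:quotient} in one stroke; you instead pass to the Frattini quotient $G/\Phi(G)\cong C_p^{\,d}$, which works but degenerates when $d=1$ and forces you to treat the cyclic case $C_{p^s}$ by hand. The paper's choice of an order-$p^2$ quotient is marginally cleaner precisely because it absorbs the cyclic case into the same dichotomy, whereas your version requires the extra observation that $\gamma(C_{p^s})=1$ for $s\geq 2$ --- an observation you correctly supply, so there is no gap. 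Your Frattini-quotient derivation of part (2) via Theorem~\ref{thm:abelian}(2) is also sound, though heavier than needed given the one-line direct cover.
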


\begin{proof}
  \emph{Assertion 1.} There is a normal subgroup $N$ of $G$ of index $p^2$ such that the quotient group $G/N$ is isomorphic to either $C_{p^2}$ or $C_p\times C_p$. The assertion follows from Lemma~\ref{lem:quotient}.
  
  \emph{Assertion 2.} Let $G$ be the internal direct product of $P$, $Q$ and $N$, where $P$ and $Q$ are the non-trivial Sylow $p$- and Sylow $q$- subgroups of $G$. Clearly, $NP$ and $NQ$ form a dominating set of $\Gamma(G)$.
\end{proof}

Let $G$ be a finite group. We denote by $R_G$ the intersection of the subgroups in the lower central series of $G$. This subgroup is the smallest subgroup of $G$ in which the quotient group $G/R_G$ is nilpotent. Obviously, the nilpotent residual $R_G$ is a proper subgroup whenever $G$ is a solvable group.

\begin{cor}\label{cor:nilpotent}
Let $G$ be a finite group such that $G/R_G$ has proper non-trivial subgroups. Then 
\begin{enumerate}
  \item $\gamma(G)\leq p+1$ if $G/R_G$ is a $p$-group.
  \item $\gamma(G)\leq 2$ if $G/R_G$ is not a $p$-group.
\end{enumerate}  \qed
\end{cor}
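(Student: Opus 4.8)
The plan is to realize this as a direct consequence of the Proposition immediately preceding it, combined with Lemma~\ref{lem:quotient}, applied to the normal subgroup $N=R_G$. The key observation is that $R_G$ is by definition the smallest normal subgroup for which the quotient is nilpotent, so $Q:=G/R_G$ is a finite \emph{nilpotent} group, and the Proposition applies to it directly. The whole argument is therefore a composition: bound $\gamma(Q)$ using the Proposition, then lift that bound up to $\gamma(G)$ using Lemma~\ref{lem:quotient}.

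First I would verify that the hypotheses of the Proposition are satisfied by $Q$. By assumption $Q=G/R_G$ has proper non-trivial subgroups, which rules out both the trivial group and $Q\cong C_p$ (the latter has no proper non-trivial subgroups). In particular $Q\ncong C_p$, so the Proposition yields $\gamma(Q)\leq p+1$ when $Q$ is a $p$-group and $\gamma(Q)\leq 2$ when $Q$ is not a $p$-group. This already supplies the two desired numerical bounds, but for the quotient $Q$ rather than for $G$ itself.

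Next I would transfer these bounds from $Q$ up to $G$ via Lemma~\ref{lem:quotient} taken with $N=R_G$. Since the nilpotent residual $R_G$ is a characteristic, hence normal, subgroup of the finite group $G$, and since we have already observed $G/R_G\ncong C_p$, the lemma gives $\gamma(G)\leq\gamma(G/R_G)=\gamma(Q)$. Chaining this inequality with the case distinction of the previous step produces exactly the two claimed bounds.

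I do not expect a genuine obstacle here: the statement is essentially the Proposition composed with Lemma~\ref{lem:quotient}. The only points requiring care are (a) confirming that the hypothesis ``$G/R_G$ has proper non-trivial subgroups'' is precisely what is needed to invoke the Proposition legitimately, as it guarantees $Q$ is neither trivial nor cyclic of prime order, and (b) noting $R_G\trianglelefteq G$ so that Lemma~\ref{lem:quotient} indeed applies. The finiteness of $G$ makes the ``moreover'' clause of Lemma~\ref{lem:quotient} available, though it is not strictly needed here since $Q\ncong C_p$ holds outright.
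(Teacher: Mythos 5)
Your proposal is correct and is exactly the argument the paper intends: the corollary is stated with no written proof precisely because it is the preceding Proposition applied to the nilpotent quotient $G/R_G$ (which the hypothesis guarantees is neither trivial nor $C_p$), lifted to $G$ via Lemma~\ref{lem:quotient} with $N=R_G\trianglelefteq G$. Your two points of care are the right ones and are handled correctly.
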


Let $D_{2n}$ denotes the dihedral group of order $2n$. Then $R_{D_{2n}}=D_{2n}'\cong C_n$ and so the quotient $D_{2n}/R_{D_{2n}}$ has no proper non-trivial subgroups; thus, Corollary~\ref{cor:nilpotent} does not apply in this case. Nevertheless, the structure of dihedral groups are fairly specific allowing us to determine exact formulas for their domination number depending on the order $2n$.

\begin{lem}\label{lem:dihedral}
    Let $p$ be the smallest prime dividing $n$. Then $$ \gamma(D_{2n})=
    \begin{cases}
      p\,, & \text{if }\, p^2\divides  n \\
      p+1\,, & \text{otherwise}
    \end{cases} $$
  \end{lem}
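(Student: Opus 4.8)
# Proof Proposal for Lemma~\ref{lem:dihedral}

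\textbf{Setup and strategy.}
The plan is to analyze the subgroup structure of $D_{2n}=\langle r,s \mid r^n=s^2=1,\ srs=r^{-1}\rangle$ and identify exactly which minimal subgroups must be covered by a dominating set. The minimal subgroups come in two flavors: the cyclic subgroups $\langle r^{n/q}\rangle$ of order $q$ inside the rotation subgroup $\langle r\rangle\cong C_n$ (one for each prime $q\mid n$), and the $n$ reflection subgroups $\langle r^i s\rangle\cong C_2$ for $0\le i\le n-1$. Since a dominating set must cover all elements of prime order, the crux is covering all $n$ reflections together with the rotations of prime order. I would exploit the fact that any proper subgroup of $D_{2n}$ is either cyclic (contained in $\langle r\rangle$) or dihedral of the form $\langle r^d, r^j s\rangle$ for some $d\mid n$ with $d>1$; such a dihedral subgroup contains exactly the $n/d$ reflections $r^{j+kd}s$. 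Thus the covering of reflections is governed by residues modulo $d$.

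\textbf{Reducing to the reflection-covering problem.}
First I would observe that to cover all reflections using proper dihedral subgroups $\langle r^d, r^j s\rangle$, one wants $d$ as small as possible so that each subgroup covers as many reflections as possible, but $d=1$ is excluded since that would give the whole group. Hence the optimal choice is $d=p$, the smallest prime dividing $n$: the subgroup $\langle r^p, r^j s\rangle$ has index $p$ and covers exactly the reflections in the residue class $j \bmod p$. Since the residues $j=0,1,\dots,p-1$ partition all reflections into $p$ classes, we need \emph{at least} $p$ such index-$p$ dihedral subgroups to cover all $n$ reflections, and exactly the $p$ subgroups $\langle r^p, r^j s\rangle$ for $0\le j\le p-1$ suffice. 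This gives the lower bound $\gamma(D_{2n})\ge p$ for the reflection part, and I would argue no cyclic subgroup helps (cyclic subgroups lie in $\langle r\rangle$ and contain no reflections), so reflections alone force at least $p$ dihedral subgroups.

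\textbf{Handling the rotation subgroups of prime order.}
The remaining issue is whether the $p$ reflection-covering subgroups $\langle r^p, r^j s\rangle$ also cover the rotations of prime order, in particular $\langle r^{n/p}\rangle$ (order $p$) and $\langle r^{n/q}\rangle$ for other primes $q\mid n$. Here is where the case distinction $p^2\mid n$ versus $p^2\nmid n$ enters. Each subgroup $\langle r^p, r^j s\rangle$ contains the rotations $\langle r^p\rangle$, which is all of $X_{n/p}$ intersected appropriately; crucially $r^{n/q}\in\langle r^p\rangle$ for every prime $q\mid n$ with $q\ne p$ (since then $p\mid n/q$), so those are automatically covered. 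The delicate element is $r^{n/p}$, the generator of the unique order-$p$ rotation subgroup: it lies in $\langle r^p\rangle$ if and only if $p\mid n/p$, i.e. if and only if $p^2\mid n$. I expect this to be the \textbf{main obstacle}, and it resolves the two cases:

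\begin{itemize}
\item If $p^2\mid n$, then $r^{n/p}\in\langle r^p\rangle$ is already covered by each of the $p$ dihedral subgroups, so $\{\langle r^p, r^j s\rangle : 0\le j\le p-1\}$ is a dominating set and $\gamma(D_{2n})=p$.
\item If $p^2\nmid n$, then $r^{n/p}\notin\langle r^p\rangle$, so the order-$p$ rotation subgroup $\langle r^{n/p}\rangle$ is left uncovered by the $p$ reflection subgroups, and no single additional reflection subgroup can absorb it; hence one extra subgroup is needed (e.g. adjoin $\langle r\rangle$ or any proper subgroup containing $r^{n/p}$), giving $\gamma(D_{2n})=p+1$.
\end{itemize}

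\textbf{Completing the bounds.}
For the lower bound in the second case, I would verify that $p$ subgroups cannot simultaneously cover all $p$ reflection residue classes \emph{and} the rotation $r^{n/p}$: any subgroup covering $r^{n/p}$ either lies in $\langle r\rangle$ (covering no reflections) or is a dihedral subgroup $\langle r^d, r^j s\rangle$ with $r^{n/p}\in\langle r^d\rangle$, forcing $d\mid n/p$ hence $d<p$ is impossible while $d=p$ requires $p^2\mid n$; so a reflection-covering subgroup containing $r^{n/p}$ would need index $d>p$, covering fewer than $n/p$ reflections and thus unable to make up a full residue class economy. A careful counting argument—each index-$d$ dihedral subgroup covers $n/d\le n/p$ reflections, so covering all $n$ reflections needs at least $p$ subgroups with equality only when all have index exactly $p$—shows that achieving the count $p$ forces every chosen subgroup to have index $p$, none of which contains $r^{n/p}$ when $p^2\nmid n$. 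This pins down the lower bound $\gamma(D_{2n})\ge p+1$ in that case and completes the proof.
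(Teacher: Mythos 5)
Your proposal is correct and follows essentially the same route as the paper: identify the minimal subgroups (prime-order rotations plus the $n$ reflections), cover the reflections by the $p$ index-$p$ dihedral subgroups $\langle r^p, r^j s\rangle$, and split cases according to whether $r^{n/p}\in\langle r^p\rangle$, i.e.\ whether $p^2\mid n$. Your counting argument for the lower bound $p+1$ in the case $p^2\nmid n$ (equality in $\sum n/d_i \ge n$ forces every subgroup to have index exactly $p$) is in fact spelled out more carefully than in the paper, which leaves that step implicit.
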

  
\begin{proof}
Let $D_{2n}:=\langle a,b\divides  a^n=b^2=1, bab=a^{-1}\rangle$. Elements of $D_{2n}$ can be listed as $$ \{1,a,a^2,\dots,a^{n-1},b,ab,a^2b,\dots,a^{n-1}b\}.  $$ It can be easily observed that the latter half of those elements are all of order two; and hence, the minimal subgroups of $D_{2n}$ consists of subgroups of $\langle a\rangle$ that are of prime order and subgroups $\langle a^jb\rangle$, where $j\in\{0,1,\dots,n-1\}$. And the maximal subgroups of $D_{2n}$ consists of $\langle a\rangle$ and subgroups of the form $\langle a^t,a^rb\rangle$, where $t\divides n$ is a prime number. Fix a prime $t\divides n$. Observe that any element of the form $a^jb$ is contained exactly one of the maximal subgroups $T_r:=\langle a^t,a^rb\rangle$ with $r\in\{0,1,\dots,t-1\}$. Let $p$ be the smallest prime dividing $n$. Obviously, the union of the subgroups $P_r:=\langle a^p,a^rb\rangle$ with $r\in\{0,1,\dots,p-1\}$ contains all minimal subgroups of the form $\langle a^jb\rangle$, $j\in\{0,1,\dots,n-1\}$, and there are no possible way to cover them with fewer than $p$ subgroups. Finally, if $p^2\divides n$, those subgroups contains all minimal subgroups of $\langle a\rangle$; and if $p^2\ndivides n$, we must take $P_r$, $r\in\{0,1,\ldots,p-1\}$, together with a subgroup containing $\langle a^{n/p}\rangle$ to form a dominating set with least cardinality.
\end{proof}

\begin{rem}\label{rem:nsum}
  It is difficult to find an $n$-sum group $G$ such that $\gamma(G)=n$ and $\mathrm{sfp}(G)<\mathrm{exp}(G)$. One example satisfying those conditions is the dihedral group of order $36$. By Lemma~\ref{lem:dihedral}, $\gamma(D_{36})=3$. Moreover, any cyclic subgroup of $D_{36}$ is contained by those three subgroups that are of index two; and hence, $D_{36}$ is a $3$-sum group. 
\end{rem}

Consider the \emph{normal} series of subgroups
$$ G=G_0\geq G_1\geq \dots \geq G_k=1.  $$
By the third isomorphism theorem and Lemma~\ref{lem:quotient}, $\gamma(\frac{G_0/G_2}{G_1/G_2})=\gamma(G_0/G_1)\geq\gamma(G_0/G_2)$. And, by a repeated application, we have
$$ \gamma(G)\leq \gamma(G/G_{k-1})\leq\dots \leq \gamma(G/G_1).  $$

\begin{prop}\label{prop:super}
  Let $G$ be a finite supersolvable group with proper non-trivial subgroups. Then $\gamma(G)\leq p+1$ for some prime divisor $p$ of $|G|$.
\end{prop}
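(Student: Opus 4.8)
The plan is to push $\gamma(G)$ down a chief series until the quotient is small enough to analyse by hand. Since $G$ is supersolvable it possesses a chief series
$$ G = G_0 \trianglerighteq G_1 \trianglerighteq \cdots \trianglerighteq G_k = 1 $$
in which every $G_i$ is normal in $G$ and every factor $G_{i-1}/G_i$ is cyclic of prime order; this characterisation of supersolvability is the single structural fact I would use. Because $G$ has proper non-trivial subgroups, $|G|$ is neither $1$ nor a prime, so it has at least two prime factors counted with multiplicity and hence $k \geq 2$. Thus $G_2$ is a well-defined normal subgroup of $G$ and the quotient $G/G_2$ makes sense.

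Next I would invoke the chain of inequalities displayed immediately before the statement, which is exactly the iterate of the finite-group form of Lemma~\ref{lem:quotient} along a normal series:
$$ \gamma(G) \leq \gamma(G/G_{k-1}) \leq \cdots \leq \gamma(G/G_2) \leq \gamma(G/G_1). $$
In particular $\gamma(G) \leq \gamma(G/G_2)$. By construction $G/G_2$ has a normal subgroup $G_1/G_2$ of prime order $q=|G_1/G_2|$ with quotient $(G/G_2)/(G_1/G_2) \cong G/G_1$ of prime order $p=|G/G_1|$, so $|G/G_2| = pq$; I write $p$ for the larger of these two primes and note that $p$ divides $|G/G_2|$, hence $|G|$. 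The problem is thereby reduced to bounding the domination number of a group of order $pq$, where the case $p=q$ (order $p^2$) is allowed.

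To finish I would bound $\gamma(G/G_2)$ by $p+1$. Recall from the introductory discussion that the set of all minimal subgroups of any group is a dominating set, so it suffices to show that a group $H$ with $|H|=pq$ and $p\geq q$ has at most $p+1$ minimal subgroups — and since every proper non-trivial subgroup of such an $H$ has prime order, every such subgroup is in fact minimal. A short Sylow count across the four isomorphism types $C_{p^2}$, $C_p\times C_p$, $C_{pq}$ and the non-abelian $C_p\rtimes C_q$ confirms this: the numbers of minimal subgroups are $1$, $p+1$, $2$ and $p+1$ respectively, the abelian values also following from Theorem~\ref{thm:abelian}. Hence $\gamma(G/G_2)\leq p+1$, and combined with $\gamma(G)\leq\gamma(G/G_2)$ from the previous step we obtain $\gamma(G)\leq p+1$ for a prime divisor $p$ of $|G|$.

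The reduction and the bookkeeping are routine once the passage to a quotient of order $pq$ is in place; the only delicate point is the non-abelian case $C_p\rtimes C_q$, where one must check that each of its $p$ Sylow $q$-subgroups is at once minimal and maximal, so that such a subgroup can be dominated only by itself. This is what pushes the count up to exactly $p+1$, and I expect verifying it — rather than any step of the reduction — to be the main, if modest, obstacle.
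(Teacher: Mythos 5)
Your proof is correct and follows the same overall route as the paper's: pass to the quotient of $G$ by the second term $G_2$ of a chief series, observe that this quotient has order $p^2$ or $pq$, bound its domination number directly, and pull the bound back through Lemma~\ref{lem:quotient}. The one substantive difference lies in the order-$pq$ case with $p\neq q$: the paper asserts $\gamma(G/N)\leq 2$ there, which is valid only when the quotient is cyclic; if instead $G/N\cong C_p\rtimes C_q$ --- as happens already for $G=S_3$, where the only normal subgroup whose index is a prime square or a product of two distinct primes is the trivial one, and $\gamma(S_3)=4$ --- the intersection graph of the quotient consists of $p+1$ isolated vertices and $\gamma(G/N)=p+1$. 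Your explicit four-case check of the groups of order $pq$ (and $p^2$) catches exactly this possibility and verifies that the bound $p+1$, taken for the larger prime $p$, still holds, so the stated conclusion survives. In short, your write-up does not merely reproduce the published argument but repairs a small oversight in it; the only point worth tightening is to note explicitly that in a group of order $pq$ all proper non-trivial subgroups have prime order, so the intersection graph is edgeless and its domination number equals the number of minimal subgroups, which is what your Sylow count computes.
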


\begin{proof}
  Since $G$ is a finite supersolvable group, it has a normal subgroup $N$ of index $m$ where $m$ is either a prime square or a product of two distinct primes. In the first case, $\gamma(G/N)$ is at most $p+1$ where $p=\sqrt{m}$, and in the latter case $\gamma(G/N)$ is at most $2$. The proof follows from Lemma~\ref{lem:quotient}.
\end{proof}

At this stage it is tempting to conjecture that the assertion of Proposition~\ref{prop:super} holds more generally for solvable groups. However, we may construct a counterexample in the following way. Let $G=NH$ be a Frobenius group with complement $H$ such that the kernel $N$ is a minimal normal subgroup of $G$. Further, suppose that $H\cong C_q$ for some prime $q$. Notice that since Frobenius kernels are nilpotent, $G$ must be a solvable group. On the other hand, since $N$ is a minimal normal subgroup, $N$ has no characteristic subgroup and in particular $N\cong C_p\times\dots\times C_p$ for some prime $p$. Suppose that the rank $r$ of $N$ is greater than $1$. Now, since $N_G(H)=H$ and $N$ is a minimal normal subgroup, each conjugate of $H$ is a maximal subgroup. That means there are totally $|G:H|=p^r$ isolated vertices in $\Gamma(G)$ which, in turn, implies that $\gamma(G)=p^r+1$. As $p^r=1+kq$ for some integer $k$ by Sylow's Theorem, the domination number $\gamma(G)$ is greater than both $p+1$ and $q+1$.

\begin{prop}\label{prop:solvable}
  Let $G$ be a finite solvable group and $H,K$ be a pair of maximal subgroups such that $(|G:H|,|G:K|)=1$. Then, $\gamma(G)\leq |G:N_G(H)| + |G:N_G(K)|$.
\end{prop}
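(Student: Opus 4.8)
The plan is to build an explicit dominating set from the conjugacy classes of $H$ and $K$ and then simply count its members. By the criterion recalled in Section~\ref{sec:intro}, a family of proper subgroups dominates $\Gamma(G)$ exactly when the union of its members contains every element of $G$ of prime order. So I would let $\mathcal{D}$ be the collection of all $G$-conjugates of $H$ together with all $G$-conjugates of $K$. Each such conjugate is proper, being conjugate to a maximal subgroup, so these are genuine vertices; and there are precisely $|G:N_G(H)|$ conjugates of $H$ and $|G:N_G(K)|$ conjugates of $K$. The entire problem then reduces to verifying that every prime-order element of $G$ lies in a conjugate of $H$ or of $K$.

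The core of the argument is a Sylow-covering step fed by the coprimality hypothesis. First I would observe that if a prime $\ell$ does not divide $|G:H|$, then the full $\ell$-part of $|G|$ already divides $|H|$, so $H$ contains a Sylow $\ell$-subgroup $S$ of $G$. Given any $g\in G$ of order $\ell$, it lies in some Sylow $\ell$-subgroup $P$, and by the conjugacy part of Sylow's Theorem (Remark~\ref{rem2}) we have $P=S^{x}$ for some $x\in G$; hence $g\in S^{x}\leq H^{x}$, a conjugate of $H$. The same reasoning applies to $K$ whenever $\ell$ fails to divide $|G:K|$. Now the hypothesis $(|G:H|,|G:K|)=1$ says that no prime divides both indices, so for each prime $\ell$ at least one of $\ell\nmid|G:H|$ or $\ell\nmid|G:K|$ holds. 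Applying the observation to whichever index $\ell$ misses shows that every element of prime order lands in a conjugate of $H$ or of $K$.

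It then follows at once that $\mathcal{D}$ is a dominating set, whence $\gamma(G)\leq|\mathcal{D}|\leq|G:N_G(H)|+|G:N_G(K)|$, which is the asserted bound (the inequality $\leq|\mathcal{D}|$ accommodating any coincidences among the listed conjugates).

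I expect the only genuinely delicate point to be recognising that coprimality of the two indices is precisely the combinatorial condition making the Sylow pigeonhole succeed at every prime simultaneously: for each $\ell$ one must route an order-$\ell$ element into whichever of $H$, $K$ still retains a full Sylow $\ell$-subgroup, and coprimality guarantees that at least one of them does. Maximality is used only to ensure that $H$, $K$ and their conjugates are proper (hence valid vertices), and in fact the covering argument uses neither solvability nor maximality beyond properness; solvability is rather the natural habitat in which maximal subgroups of coprime---indeed prime-power---index present themselves, so I would flag this as the reason for the stated hypotheses.
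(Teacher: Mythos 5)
Your proof is correct and is essentially the paper's argument: cover each minimal (prime-order) subgroup by a conjugate of whichever of $H$, $K$ has index coprime to that prime, and count the conjugates by the normalizer indices $|G:N_G(H)|$ and $|G:N_G(K)|$. The only cosmetic difference is that the paper invokes Hall's Theorem where you use Sylow's Theorem (to which Hall's Theorem reduces for a single prime), and your closing observation---that solvability is not needed for the covering step but only to guarantee the existence of such a pair---matches the remark the paper places right after the proposition.
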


Notice that Hall's Theorem guarantees the existence of such a pair of maximal subgroups provided $G$ is not a $p$-group. Notice also that one of the summands in the stated inequality can always be taken $1$.

\begin{proof}
  Take a minimal subgroup $A$. If $|A|\divides |G:H|$, then by Hall's Theorem a conjugate of $K$ contains $A$. And if $|A|\ndivides |G:H|$, then clearly a conjugate of $H$ contains $A$.
\end{proof}

\section{Permutation groups}
\label{sec:permutation}

We denote by $S_X$ the group of the permutations of the elements of $X$. If $X= [n] := \{1,2,\dots,n\}$, we simply write $S_n$; and $A_n$ will be the group of even permutations on $n$ letters. Notice that $S_n/R_{S_n}=S_n/A_n\cong C_2$.

\begin{lem}\label{lem:SnAn}
  $\gamma(S_n)\neq 1$ and $\gamma(A_n)\neq 1$.   
\end{lem}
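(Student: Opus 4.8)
The plan is to reduce both assertions to the generation statement supplied by Lemma~\ref{lem:dom1}. That lemma tells us that $\gamma(G)=1$ holds precisely when $N_G$ is a \emph{proper} subgroup of $G$, where $N_G$ is the subgroup generated by the minimal subgroups of $G$. Consequently, to rule out $\gamma=1$ it is enough to verify the opposite, namely that the minimal subgroups generate the entire group, so that $N_G=G$ fails to be proper. Since the minimal subgroups of a finite group are exactly the subgroups of prime order, the whole problem becomes: \emph{is $S_n$ (respectively $A_n$) generated by its elements of prime order?}

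For $S_n$ I would invoke the classical fact that the transpositions generate $S_n$; each transposition is an involution, hence an element of prime order $2$, lying in a minimal subgroup. Therefore $N_{S_n}=S_n$, which is not proper, and Lemma~\ref{lem:dom1} forces $\gamma(S_n)\neq 1$. For $A_n$ the analogous input is that the $3$-cycles generate $A_n$ for $n\geq 3$; each $3$-cycle has prime order $3$, so again $N_{A_n}=A_n$ is not proper and $\gamma(A_n)\neq 1$. Both generation facts are entirely standard, so this is the conceptual core of the argument and requires no computation beyond citing them.

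The only remaining care is with degenerate small-degree cases where $\Gamma(G)$ has no edges at all. For $n\leq 2$ the group $S_n$ is trivial or isomorphic to $C_2$, and for $n\leq 3$ the group $A_n$ is trivial or isomorphic to $C_3$; in each such case $\Gamma(G)$ is the empty graph and the convention $\gamma(G)=\aleph_0$ applies, which still gives $\gamma\neq 1$. I expect this bookkeeping to be the only mild obstacle, and it is dispatched immediately by the stated conventions; for the remaining (and essentially all) degrees the generation argument applies verbatim. Thus the proof is short: it pivots on Lemma~\ref{lem:dom1} together with the observation that transpositions and $3$-cycles are prime-order elements whose minimal subgroups already generate $S_n$ and $A_n$.
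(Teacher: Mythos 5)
Your proposal is correct and follows essentially the same route as the paper: both reduce the claim to Lemma~\ref{lem:dom1} by observing that transpositions (prime order $2$) generate $S_n$ and $3$-cycles (prime order $3$) generate $A_n$ for $n\geq 4$, so $N_{S_n}=S_n$ and $N_{A_n}=A_n$ are not proper, with the degenerate small-degree cases $S_2\cong C_2$ and $A_3\cong C_3$ handled by the stated convention. No gaps.
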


\begin{proof}
  As $S_2\cong C_2$, we see that $\gamma(S_2)\neq 1$. For $n\geq 3$, since any transposition generates a minimal subgroup of the symmetric group and since adjacent transpositions generate the whole group, $\gamma(S_n)\neq 1$ by Lemma~\ref{lem:dom1}. Similarly, $\gamma(A_3)\neq 1$; and for $n\geq 4$, since $3$-cycles generate $A_n$, $\gamma(A_n)\neq 1$ by the same lemma.
\end{proof}

Let $G$ be a permutation group acting faithfully on $X=\{1,2,\ldots,n\}$. An element $g\in G$ is called \emph{homogeneous} if the associated permutation has cycle type $(p^{k},1^{n-pk})$ with $p$ a prime. Since any minimal subgroup of $G$ is a cyclic group of prime order, in order for $\mathcal{D}\subseteq V(G)$ to be a dominating set of $\Gamma(G)$, the union of the elements of $\mathcal{D}$ must contain every homogeneous element of $G$ and vice versa.

\begin{thm}\label{thm:Sn}
Let $\vartheta\colon \mathbb{N}\to\mathbb{N}$ be the function given by
$$ \vartheta(n)=\left\{
   \begin{array}{llc}
      n+1\,, & \text{if }\, n=2k+1 &  \\
      n\,, & \text{if }\, n=2k & 
   \end{array}\right.
$$
Then $\gamma(S_n)\leq \vartheta(n)$.
\end{thm}

\begin{proof}
Let $\mathcal{D}$ be a dominating set for $\Gamma(S_n)$ of minimal size. Without loss of generality we may assume that the elements of $\mathcal{D}$ are maximal subgroups of $S_n$. We know that every homogeneous element of $S_n$ must belong to some subgroup in $\mathcal{D}$, in particular, every involution (elements of order 2) must be covered by $\mathcal{D}$. Observe that if some collection of subgroups covers all involutions of type   $(2^l,1^{n-2l})$, with $l$ is odd, but not all homogeneous elements, then by adding $A_n$ to this collection we would get a dominating set for $\Gamma(S_n)$, for $A_n$ contains all cycles of prime length $>2$. 

For $n=2k+1$, any involution must fix a point, hence must belong to some point stabilizer. Hence, together with $A_n$ we have a dominating set of cardinality $n+1$.

For $n=2k$, we consider the $2$-set stabilizers of $S_n$. Let $X_j=\{1,j\}$ and $Y_j=[n]\setminus X_j$, where $2\leq j\leq n$. Observe that the union of the subgroups $S_{X_j}\times S_{Y_j}$ of $S_n$ contains all involutions. Thus, together with $A_n$ we have a dominating set of cardinality $n$.
\end{proof}

\begin{cor}\label{cor:primitive}
  Let $G$ be a primitive subgroup of $S_n$ containing an odd permutation, and let $\vartheta$ be the function defined in Theorem~\ref{thm:Sn}. Then, the inequality $\gamma(G)\leq \vartheta(n)$ holds. 
\end{cor}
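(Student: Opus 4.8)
The plan is to reuse the explicit dominating sets constructed inside the proof of Theorem~\ref{thm:Sn} and feed them into Lemma~\ref{lem:subgroup}. Recall that Lemma~\ref{lem:subgroup}, applied with the ambient group $S_n$ in the role of ``$G$'' and the primitive subgroup $G$ in the role of ``$H$'', yields $\gamma(G)\leq|\mathcal{D}|$ for any dominating set $\mathcal{D}$ of $\Gamma(S_n)$, provided that no element of $\mathcal{D}$ contains $G$. So the whole argument reduces to checking this single containment condition for the specific sets $\mathcal{D}$ of size $\vartheta(n)$ produced in the theorem.

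First I would record the structure of those dominating sets. In each of the five cases, $\mathcal{D}$ consists of \emph{intransitive} maximal subgroups---either point stabilizers $S_1\times S_{n-1}$, or the $\binom{n}{2}$ conjugates of $S_{\{1,2\}}\times S_{\{3,\ldots,n\}}\cong S_2\times S_{n-2}$---together with, in some cases, the alternating group $A_n$. The key observation is that $A_n$ is adjoined to $\mathcal{D}$ \emph{only} in the three cases other than $(\ast)$ and $(\ast\ast)$; in cases $(\ast)$ and $(\ast\ast)$ the point stabilizers (respectively the conjugates of $S_2\times S_{n-2}$) already cover every homogeneous element of $S_n$, so there $\mathcal{D}$ is built purely from intransitive subgroups.

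Next I would verify the containment condition. Since $G$ is primitive it is in particular transitive, and a transitive group can never sit inside an intransitive subgroup (a subgroup of an intransitive group preserves its orbits and hence is itself intransitive). Thus no intransitive element of $\mathcal{D}$ contains $G$. It remains only to rule out $G\leq A_n$ when $A_n\in\mathcal{D}$; but $G\leq A_n$ holds precisely when every element of $G$ is even, so the hypothesis that $G$ contains an odd permutation gives exactly $G\not\leq A_n$. Hence no element of $\mathcal{D}$ contains $G$, and Lemma~\ref{lem:subgroup} delivers $\gamma(G)\leq|\mathcal{D}|=\vartheta(n)$. For the ``moreover'' clause, in cases $(\ast)$ and $(\ast\ast)$ the set $\mathcal{D}$ contains no copy of $A_n$, so transitivity alone excludes every containment and the odd-permutation hypothesis is no longer needed; the bound then holds for every primitive subgroup, whether or not it meets $A_n$.

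The argument is essentially a matter of bookkeeping, so I do not anticipate a genuine obstacle. The one point that needs care is making sure the dominating sets exhibited in Theorem~\ref{thm:Sn} really are assembled solely from intransitive maximal subgroups and (sometimes) $A_n$, and in particular that $A_n$ is absent exactly in cases $(\ast)$ and $(\ast\ast)$. Once that structural bookkeeping is pinned down, the transitivity/odd-permutation dichotomy maps cleanly onto the two clauses of the corollary.
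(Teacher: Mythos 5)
Your proposal is correct and follows essentially the same route as the paper: both arguments feed the explicit dominating sets from the proof of Theorem~\ref{thm:Sn} into Lemma~\ref{lem:subgroup}, observing that a primitive group cannot lie in the intransitive subgroups used and that the odd-permutation hypothesis rules out containment in $A_n$, which is absent precisely in cases $(\ast)$ and $(\ast\ast)$. Your bookkeeping of which cases use $A_n$ matches the theorem's proof exactly.
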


\begin{proof}
  Since $G$ is a primitive subgroup, it is not contained by any imprimitive subgroup. The proof follows from the fact that the only primitive subgroup used to form dominating sets in the proof of Theorem~\ref{thm:Sn} is $A_n$, and from Lemma~\ref{lem:subgroup}.
\end{proof}

\section{Intersection graphs of $G$-sets}
\label{sec:G-sets}

As is well-known any transitive $G$-set $\Omega$ is equivalent to a (left) coset space $G/G_x$, where $G_x$ is the stabilizer of a point $x\in\Omega$. So, for example if $\Omega$ is a regular $G$-set, then it can be represented with $G/1$; and if $\Omega$ is the trivial $G$-set, it is represented with $G/G$. Since any $G$-set is the disjoint union of transitive $G$-sets, given a $G$-set $\Omega$ it can be represented as the sum of coset spaces. For a subgroup $H$ of $G$ we denote by $(H)$ the conjugacy class of $H$ in $G$, and by $[G/H]$ the isomorphism class of the transitive $G$-set $G/H$. It is well-known that $[G/H]=[G/K]$ if and only if $H=\leftidx{^g}{K}$ for some $g\in G$.

The \emph{Burnside ring} $B(G)$ of $G$ is the ring generated by the isomorphism classes of $G$-sets, where addition is the disjoint union and product is the Cartesian product of $G$-sets. Therefore a typical element of $B(G)$ is of the form
$$
\sum\limits_j a_j[G/H_j]
$$  
where $a_j$ are integers and $H_j$ are the representatives of the conjugacy classes of subgroups of $G$. Let $A$ and $B$ be normal subgroups of a group $G$. Then the canonical map $gA\cap B\mapsto (gA,gB)$ from $G/(A\cap B)$ to $G/A\times G/B$ is an injective group homomorphism. Now, let us consider two arbitrary subgroups (not
necessarily normal) $H_1,H_2$ of $G$. In this case, $G/H_1$ and $G/H_2$ are still $G$-sets and the diagonal action of $G$ on the Cartesian product $G/H_1\times G/H_2$ yields the map
\begin{align*}
  \phi \colon G/(H_1\cap H_2) & \longrightarrow\; G/H_1\times G/H_2 \\
        gH_1\cap H_2 & \longmapsto\; (gH_1,gH_2)
\end{align*}
which is an injective $G$-equivariant map. That is to say, subgroup
intersection is related with the multiplication operator of the Burnside ring.

Let $\Omega_1$ and $\Omega_2$ be two $G$-sets and let $\mathcal{R}_{(\Omega_1,\Omega_2)}$ be a set of representative elements for the orbits of $\Omega_1\times\Omega_2$. The Cartesian product $\Omega_1\times\Omega_2$ decomposes into the disjoint union of transitive $G$-sets in a non-trivial fashion. More precisely,
$$ \Omega_1\times\Omega_2 \cong \bigsqcup\limits_{(x,y)\in\mathcal{R}_{(\Omega_1,\Omega_2)}} G/(G_x\cap G_y) $$
where $G_x$ and $G_y$ are the stabilizers of the points $x\in\Omega_1$ and $y\in\Omega_2$ respectively. Let $\mathcal{R}_{(H,K)}$ be a set of representatives for the equivalence classes of $(H,K)$-double cosets. Setting $\Omega_1=G/H$, $\Omega_2=G/K$ and using sigma notation for disjoint union, we may write
$$[G/H][G/K]=\sum\limits_{g\in\mathcal{R}_{(H,K)}}[G/(H\cap \leftidx{^g}{K})]$$
as the orbits of $G/H\times G/K$ are parametrized by the $(H,K)$-double cosets. For more information the reader may refer to \cite[Chapter~I]{tomDieck1979}. 

Let $G$ be a fixed finite group. The following definition is suggested to me by Ergün Yaraneri. The \emph{intersection graph} $\Gamma[\Omega]$ of a $G$-set $\Omega$ is the simple graph with vertex set the proper non-trivial stabilizers of points in $\Omega$ and there is an edge between two distinct stabilizers if and only if their intersection is non-trivial. We used in this notation ``brackets'' instead of ``parentheses'' to emphasize that the argument is a $G$-set. Observe that since $G$ is a finite group, there are at most finitely many intersection graphs $\Gamma[\Omega]$ that can be associated with $G$-sets. Notice that both $\Gamma[G/1]$ and $\Gamma[G/G]$ are empty graphs by definition.

\begin{example}
  Take $\Omega:=\{1,2,\dots,n\}$ as the vertex set of a regular $n$-gon on plane with $n$ being an odd number. Since the automorphism group of this polygon is isomorphic to the dihedral group $D_{2n}$, considered as a $D_{2n}$-set a stabilizer of a point of $\Omega$ is corresponding to a unique involution of $D_{2n}$. Those involutions form a single conjugacy class and $\Gamma[\Omega]$ consists of $n$ isolated vertices. Let $G$ be a finite group and $H$ be a subgroup of $G$. As a general fact $\Gamma[G/H]$ consists of $|G:H|$ isolated vertices if and only if $G$ is a Frobenius group with complement $H$.
\end{example}

We denote by $\mathcal{C}(G)$ the set of conjugacy classes of subgroups of $G$. By the following Proposition, intersection graphs of groups can be seen as particular cases of intersection graphs of $G$-sets.

\begin{prop}\label{prop:sigma}
  Let $G$ be a finite group and  $\Sigma:=\bigsqcup_{(H)\in\mathcal{C}(G)}[G/H]$. Then
  $$ \Gamma[\Sigma]=\Gamma(G).$$
\end{prop}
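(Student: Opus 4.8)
The plan is to show that the two graphs have the same vertex set and the same adjacency relation; once the vertices are matched, the equality of edge sets is immediate, so the only real content is the identification of the vertices of $\Gamma[\Sigma]$ with the proper non-trivial subgroups of $G$.

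First I would compute which subgroups arise as point stabilizers in $\Sigma$. For the transitive $G$-set $[G/H]$ with its left multiplication action, the stabilizer of the coset $gH$ is the conjugate $gHg^{-1}$; hence, as $gH$ ranges over the points of $[G/H]$, the stabilizers that occur are exactly the members of the conjugacy class $(H)$. Taking the disjoint union over one representative from each class in $\mathcal{C}(G)$, the stabilizers occurring among all points of $\Sigma$ therefore exhaust every subgroup of $G$: an arbitrary subgroup $K$ lies in a unique class $(H)$, so $K=gHg^{-1}$ for some $g$, and then $K$ is the stabilizer of the point $gH$ in the component $[G/H]$ of $\Sigma$.

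Next I would restrict to proper non-trivial stabilizers. By definition the vertices of $\Gamma[\Sigma]$ are the proper non-trivial subgroups appearing as stabilizers, which by the previous step is precisely the set of all proper non-trivial subgroups of $G$, that is, the vertex set of $\Gamma(G)$. Here it is important that the vertices of $\Gamma[\Omega]$ are stabilizer \emph{subgroups} and not the points themselves, so a subgroup that stabilizes many points still contributes a single vertex. Finally, adjacency in $\Gamma[\Sigma]$ is defined exactly as in $\Gamma(G)$ — distinct vertices $H$ and $K$ are joined if and only if $H\cap K\neq 1$ — so once the vertex sets are identified the edge sets coincide and $\Gamma[\Sigma]=\Gamma(G)$.

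I expect no serious obstacle here; the two points to handle carefully are getting the stabilizer-of-a-coset computation right for the chosen action convention, and observing that the vertex set of the intersection graph of a $G$-set consists of stabilizer subgroups rather than of points, which is exactly what prevents spurious repetitions of vertices.
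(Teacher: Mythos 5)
Your proposal is correct and follows essentially the same route as the paper: both arguments reduce the claim to identifying the vertex set of $\Gamma[\Sigma]$ with $V(G)$, using the fact that the stabilizers of points of $G/H$ are exactly the conjugates of $H$ and that the conjugacy classes partition the set of all subgroups. You simply spell out the stabilizer-of-a-coset computation that the paper leaves as ``obvious.''
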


\begin{proof}
  Since there is an edge between two vertices of $\Gamma[\Sigma]$ if and only if their intersection is non-trivial, it is enough to show that the vertex set of $\Gamma[\Sigma]$ is $V(G)$. However, this is obvious as the set of subgroups of $G$ can be partitioned into the conjugacy classes of subgroups:
  $$ V(G)\cup\{1, G\}=\{\leftidx{^g}{H}\,\colon\,(H)\in\mathcal{C}(G),\; g\in G\}.$$
 Notice that no vertices associated with $1$ and $G$ are in $\Gamma[\Sigma]$ by its definition.
\end{proof}

Let $G$ be a finite group. We denote by $\mathcal{A}(G)$ the set of minimal subgroups of $G$ and by $\mathcal{M}(G)$ the set of maximal subgroups. The following characterizations are easy to deduce:
\begin{itemize}
\item $N\triangleleft G \iff [G/K][G/N]=|G:NK|[G/(N\cap K)]$ for every $K\leq G$.
\item $A\in\mathcal{A}(G)\cup \{1\} \iff $ for any $K\leq G$ there are non-negative integers $a$ and $b$ such that $[G/K][G/A]=a[G/A] + b[G/1]$.  
\item $H\in\mathcal{M}(G) \iff [G/K][G/H]$ does \emph{not} contain $[G/H]$ as a summand unless $K=G$ or $K\in (H)$.
\end{itemize}
Let $A$ be a minimal subgroup, and $H$ be a maximal subgroup of $G$. Observe that in case $G$ is abelian the following equality holds.
$$
[G/H][G/A]=
\begin{cases}
  |G:H|[G/A] & \text{if } A\leq H \\
  [G/1] & \text{if } A\nleq H
\end{cases}
$$
We may restate Theorem~\ref{thm:abelian} in the following way:
\begin{enumerate}
   \item $\gamma(G)=1$ if and only if ($\ast$) there exists an $H\in\mathcal{M}(G)$ such that for every $A\in\mathcal{A}(G)$ the equality $[G/H][G/A] = |G:H|[G/A]$ holds.
   \item $\gamma(G)=2$ if and only if ($\ast\ast$) for every $H\in\mathcal{M}(G)$ there exists an $A\in\mathcal{A}(G)$ such that $|G:H|\ndivides |[G/A]|$.
   \item $\gamma(G)=p+1$ if and only if neither ($\ast$) nor ($\ast\ast$) holds and $p=|G:H|=|A|$, where $H\in\mathcal{M}(G)$ and $A\in\mathcal{A}(G)$.
\end{enumerate}

\begin{prop}\label{prop:index}
  Let $G$ be a finite group and let $\mathcal{H}=\{H_i\in V(G)\,\colon\, 1\leq i\leq s\}$ be a set having the property that for any $K\in V(G)$ there exists an $H_i\in\mathcal{H}$ such that $[G/K][G/H_i]\neq k[G/1]$ for every positive integer $k$. Then $$\gamma(G)\leq\sum\limits_{i=1}^s|G:N_G(H_i)|.$$ Moreover, $\gamma(G)=1$ if and only if there exists $H\triangleleft G$ such that $[G/K][G/H]\neq k[G/1]$ for every $K\in V(G)$. 
\end{prop}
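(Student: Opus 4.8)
The plan is to convert the Burnside-ring hypothesis into an ordinary domination condition by means of the double-coset product formula $[G/H][G/K]=\sum_{g}[G/H\cap \leftidx{^g}{K}]$ derived above. First I would record the basic fact that the isomorphism classes $[G/J]$ of transitive $G$-sets form a $\mathbb{Z}$-basis of $B(G)$, and that $[G/J]$ is isomorphic to the regular set $[G/1]$ only when $J=1$. Expanding $[G/K][G/H_i]$ by the formula, each summand $[G/K\cap\leftidx{^g}{H_i}]$ is itself a basis element, so by linear independence the product equals $k[G/1]$ for some positive integer $k$ exactly when every stabilizer $K\cap\leftidx{^g}{H_i}$ in the sum is trivial. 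Hence the defining property of $\mathcal{H}$ reads: for every $K\in V(G)$ there are an index $i$ and an element $g\in G$ with $K\cap\leftidx{^g}{H_i}\neq 1$.

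Next I would set $\mathcal{D}:=\{\leftidx{^g}{H_i}\,\colon\,1\leq i\leq s,\ g\in G\}$, the union of the full conjugacy classes of the $H_i$, and check it dominates $\Gamma(G)$ using the characterization from the introduction that $\mathcal{D}$ is dominating iff every minimal subgroup lies in some member of $\mathcal{D}$. Given a minimal subgroup $A$, apply the reformulated hypothesis to $K=A$ to obtain $i$ and $g$ with $A\cap\leftidx{^g}{H_i}\neq 1$; minimality of $A$ then forces $A\leq\leftidx{^g}{H_i}\in\mathcal{D}$. Since the conjugacy class of $H_i$ has exactly $|G:N_G(H_i)|$ members (orbit--stabilizer for the conjugation action), we conclude $\gamma(G)\leq|\mathcal{D}|\leq\sum_{i=1}^s|G:N_G(H_i)|$, with the second inequality accounting for possible coincidences among the classes.

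For the ``moreover'' clause I would specialize to a single proper nontrivial \emph{normal} subgroup $H$ and use the identity $[G/K][G/H]=|G:HK|[G/H\cap K]$ valid when $H\triangleleft G$. By the basis observation this product is a multiple of $[G/1]$ precisely when $H\cap K=1$, so the condition ``$[G/K][G/H]\neq k[G/1]$ for all $K\in V(G)$'' is equivalent to ``$H\cap K\neq 1$ for all $K\in V(G)$''. If such an $H$ exists, testing it against minimal subgroups shows $H\geq N_G$, whence $N_G$ is a proper subgroup and $\gamma(G)=1$ by Lemma~\ref{lem:dom1}. Conversely, if $\gamma(G)=1$, Lemma~\ref{lem:dom1} makes $N_G$ a proper (characteristic, hence normal) subgroup that meets every $K\in V(G)$ nontrivially because it contains every minimal subgroup; taking $H=N_G$ supplies the required normal subgroup.

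The step I expect to be the crux is the translation in the first paragraph: one must argue cleanly that ``$[G/K][G/H_i]$ is not a multiple of $[G/1]$'' is genuinely equivalent to the existence of a \emph{single} nontrivial intersection $K\cap\leftidx{^g}{H_i}$, and this rests entirely on the linear independence of the transitive basis of $B(G)$ together with the identification of $[G/1]$ as the unique transitive set with trivial stabilizer. Once this equivalence is in hand, the remaining work is a routine application of the product formulas quoted above, the conjugacy count, and the dominating-set characterization, with no further obstacles.
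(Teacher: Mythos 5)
Your proof is correct. The first half coincides with the paper's argument: both translate ``$[G/K][G/H_i]$ is not a multiple of $[G/1]$'' into ``$K\cap\leftidx{^g}{H_i}\neq 1$ for some $g$'' via the double-coset expansion and linear independence of the transitive basis, and then take $\mathcal{D}$ to be the union of the conjugacy classes of the $H_i$. For the ``moreover'' clause your route genuinely differs in the forward direction. The paper starts from an arbitrary dominating subgroup $L$, observes that $[G/K][G/L]$ has no regular summand, and then iterates the argument over the conjugates of $L$ to conclude that the normal core $H=\bigcap_g\leftidx{^g}{L}$ still produces no regular summand; this requires a small induction to see that the successive intersections $L\cap\leftidx{^a}{L}\cap\cdots$ remain nontrivial and keep dominating. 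You instead invoke Lemma~\ref{lem:dom1} to get the canonical witness $H=N_G$, which is characteristic (hence normal), proper when $\gamma(G)=1$, and meets every nontrivial subgroup because it contains all minimal subgroups; combined with the identity $[G/K][G/H]=|G:HK|[G/H\cap K]$ for normal $H$ this settles both directions at once. Your version is shorter and sidesteps the core-intersection induction entirely, at the cost of leaning on Lemma~\ref{lem:dom1} rather than staying purely inside the Burnside-ring calculus; the paper's version has the mild virtue of exhibiting the normal witness as the core of \emph{any} dominating subgroup. One small point both treatments share: the symbol $H\triangleleft G$ must be read as \emph{proper} normal subgroup, since $H=G$ would vacuously satisfy the displayed condition; you make this explicit, which is worth keeping.
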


\begin{proof}
  Let $\mathcal{H}$ be a set as in the statement of the Proposition. We want to show that $\mathcal{D}:=\{\leftidx{^g}{H_i}\,\colon\, H_i\in\mathcal{H},\, g\in G\}$ is a dominating set for $\Gamma(G)$. However this is obvious, since for any $K\in V(G)$, $[G/K][G/H_i]\neq k[G/1]$ implies that $K\cap\leftidx{^g}{H_i}\neq 1$ for some $g\in G$. This completes the first part of the proof.

Now, suppose that $\gamma(G)=1$. Let $L<G$ such that $L\cap K\neq 1$ for every $K\in V(G)$. Obviously, $[G/K][G/L]=\sum [G/(K\cap\leftidx{^g}{L})]=\sum [G/(\leftidx{^{g^{-1}}}{K}\cap L)]$ does not contain a regular summand, as $L$ intersects every conjugate of $K$ non-trivially. Take a conjugate $\leftidx{^a}{L}$ of $L$. Since $\leftidx{^a}{L}$ also forms a dominating set, $[G/K][G/(L\cap\leftidx{^a}{L})]=\sum [G/(\leftidx{^{g^{-1}}}{K}\cap L\cap\leftidx{^a}{L})]$ does not contain a regular summand either. Let $H:=\bigcap\leftidx{^g}{L}$. By a repeated application of the above argument we see that $[G/K][G/H]$ has no regular summand. However, $H$ is a normal subgroup which means $[G/K][G/H]\neq k[G/1]$.  

Conversely, suppose that there exists an $H\triangleleft G$ such that $[G/K][G/H]\neq k[G/1]$ for every $K\in V(G)$. Since $H$ is normal, this means $[G/K][G/H]$ has no regular summand which, in turn, implies that $\{H\}$ is a dominating set for $\Gamma(G)$. This completes the proof. 
\end{proof}

\begin{rem}
Clearly, the elements of the set $\mathcal{H}$ in the statement of Proposition~\ref{prop:index} can be taken as maximal subgroups. Then, Proposition~\ref{prop:solvable} tells us that there exists such a $2$-element set if $G$ is a solvable group but \emph{not} a $p$-group. 
\end{rem}

\section{Intersection complexes}
\label{sec:complex}

Recall that a \emph{(abstract) simplicial complex} $\mathcal{S}$ over a set $X$ is a finite collection of subsets of $X$ such that the union of those subsets is $X$ and if $\sigma$ is an element of $\mathcal{S}$, so is every subset of $\sigma$. The element $\sigma$ of $\mathcal{S}$ is called a simplex of $\mathcal{S}$ and each subset of $\sigma$ is called a face of $\sigma$. The \emph{$k$-skeleton} of $\mathcal{S}$ is the subcollection of elements of $\mathcal{S}$ having cardinality at most $k+1$; hence, the $0$-skeleton of $\mathcal{S}$ is the underlying set $X$ plus the empty set. For a group $G$, we define the \emph{intersection complex $K(G)$} of $G$ as the simplicial complex whose faces are the sets of proper subgroups of $G$ which intersect non-trivially. As a graph the $1$-skeleton of $K(G)$ is isomorphic to the intersection graph $\Gamma(G)$. This notion can be compared with the two other notions in literature, namely the order complex and the clique complex. In the first case, we begin with a poset (poset of proper non-trivial subgroups of a group in our case) and construct its order complex by declaring chains of the poset as the simplices. And, in the latter case, we take a graph (i.e. the intersection graph of a group) and the underlying set of the corresponding clique complex is the vertex set of the graph with simplices being the cliques.

\begin{examples}
\mbox{}
  \begin{enumerate} 
  \item \label{example:Q8} The quaternion group $Q_8$ has three maximal subgroups, say $\langle i\rangle,\langle j\rangle$, and $\langle k\rangle$, of order four intersecting at the unique minimal subgroup $\{-1,1\}$. Thus, $\Gamma(Q_8)$ is a complete graph $K_4$ depicted in Figure~\ref{fig:Q8}. Moreover, $K(Q_8)$ is a tetrahedron as those four vertices form a simplex. However, the order complex of the poset of proper non-trivial subgroups of $Q_8$ is isomorphic to the star graph $K_{1,3}$ as a graph. Hence, order complexes and intersection complexes are different in general. Notice that the clique complex of $\Gamma(Q_8)$ is the same as $K(Q_8)$.
  \item \label{example:C2}The intersection graph of the elementary abelian group of order eight  is represented in Figure~\ref{fig:C2}. Here the vertices on the outer circle represents the minimal subgroups and the vertices on the inner circle are the maximal subgroups. By the Product Formula any two maximal subgroups intersects at a subgroup of order $2$. Therefore, the vertices in the inner circle form a complete subgraph and those vertices form a simplex in the clique complex whereas they do not in $K(C_2\times C_2\times C_2)$. Thus, intersection complexes and clique complexes are not the same in general. Notice that  $\Gamma(C_2\times C_2\times C_2)$ is symmetrical enough to reflect the vector space structure of the group.
  \end{enumerate}
\end{examples}

\begin{figure}[h]
\centering
\begin{subfigure}{.5\textwidth}
  \centering
  \includegraphics[width=.5\textwidth]{./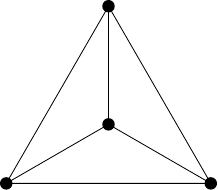}
  \caption{$\Gamma(Q_8)$}
  \label{fig:Q8}
\end{subfigure}%
\begin{subfigure}{.5\textwidth}
  \centering
  \includegraphics[width=.6\textwidth]{./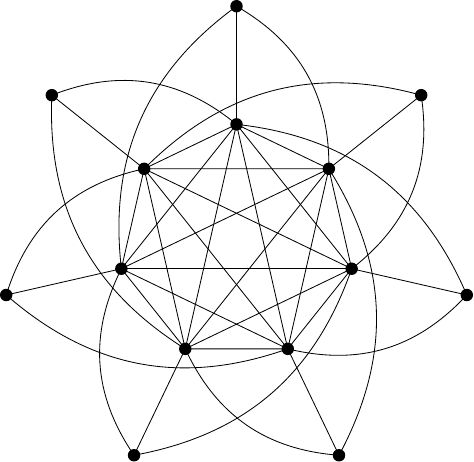}
  \caption{$\Gamma(C_2\times C_2\times C_2)$}
  \label{fig:C2}
\end{subfigure}
\caption{Intersection graphs of some groups of order $8$}
\label{fig:Q8C2}
\end{figure}

An important result on the subject for our purposes (see Lemma~\ref{lem:nerve} below) uses the definitions of algebraic topology adapted to `poset' context. By a \emph{covering} $C$ of a finite poset $\mathcal{P}$ we mean a finite collection $\{C_i\}_{i\in I}$ of subsets of $\mathcal{P}$, such that $\mathcal{P}=\cup_{i\in I}C_i$. The \emph{nerve} $\mathcal{N}(C)$ of $C$ is the simplicial complex whose underlying set is $I$ and the non-empty simplices are the $J\subseteq I$ such that $C_J:=\cap_{i\in J}C_i\neq\varnothing$. The covering $C$ is called \emph{contractible} if each $C_J$ is contractible considered as an order complex, where $J$ is a simplex in $\mathcal{N}(C)$. We say $C$ is a \emph{downward closed} covering if each $C_i$, $i\in I$, is a closed subset of $\mathcal{P}$, i.e. for each $C_i$ if the condition
$$ \text{whenever } x\in C_i \text{ and } x'\leq_{\mathcal{P}} x, \text{ then } x'\in C_i $$
is satisfied. Of course, we may define \emph{upward closed} coverings dually.

\begin{lem}[see {\cite[Theorem~4.5.2]{Smith2011}}]\label{lem:nerve}
  If $C$ is a (upward or downward) closed contractible covering of a poset $\mathcal{P}$, then the order complex of $\mathcal{P}$ is homotopy equivalent to nerve $\mathcal{N}(C)$ of $C$.
\end{lem}

Let $x$ be an element of a poset $\mathcal{P}$. We denote by $\mathcal{P}_{\leq x}$ the set of elements $x'$ of $\mathcal{P}$ satisfying $x'\leq_{\mathcal{P}} x$. Similarly, $\mathcal{P}_{\geq x} := \{x'\in \mathcal{P}\colon x'\geq x\}$. The sets $\mathcal{P}_{\leq x}$ and $\mathcal{P}_{\geq x}$ are called \emph{cones} in poset terminology, and it is a standard fact that they are contractible. Consider the set
$$
C := \{\mathcal{P}_{\geq x}\,\colon\, x \text{ is a minimal element in the ordering of } \mathcal{P}\}.
$$
Clearly, $C$ is a contractible upward closed covering of $\mathcal{P}$; hence, by Lemma~\ref{lem:nerve} the nerve $\mathcal{N}(C)$ of $C$ is homotopy equivalent to the order complex of $\mathcal{P}$. In Example~\ref{example:Q8}, we remarked that order complexes and intersection complexes are different in general. However, they are equivalent up to homotopy. 

\begin{prop}\label{prop:homotopy}
  For a group $G$ the intersection complex $K(G)$ and the order complex of the poset of proper non-trivial subgroups of $G$ are homotopy equivalent. 
\end{prop}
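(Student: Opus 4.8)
The plan is to pass to face posets and invoke Quillen's Theorem A for posets (the poset fiber theorem). Write $P$ for the poset of proper non-trivial subgroups of $G$ ordered by inclusion, so that the order complex in the statement is $\Delta(P)$, and let $\mathcal{F}$ denote the face poset of $K(G)$, that is, the set of faces of $K(G)$ ordered by inclusion. I would first recall the standard fact that the order complex $\Delta(\mathcal{F})$ of the face poset of any simplicial complex is its barycentric subdivision, so $\Delta(\mathcal{F})$ is homeomorphic, in particular homotopy equivalent, to $K(G)$. Thus it suffices to prove $\Delta(\mathcal{F})\simeq\Delta(P)$.

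Next I would define $f\colon\mathcal{F}\to P$ by sending a face $\sigma=\{H_1,\dots,H_k\}$ to the intersection $\bigcap_{H\in\sigma}H$. This is well defined: by the definition of $K(G)$ the intersection is non-trivial, and being contained in each $H_i$ it is proper, so $f(\sigma)\in P$. The map $f$ is order-reversing, since enlarging a face can only shrink the intersection; equivalently, $f$ is an order-preserving map $\mathcal{F}\to P^{\mathrm{op}}$. Because the order complex of a poset coincides with that of its opposite, $\Delta(P^{\mathrm{op}})=\Delta(P)$, and it is harmless to work with $P^{\mathrm{op}}$.

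The heart of the argument is the fiber computation. For $A\in P$ consider the lower fiber $f^{-1}\big((P^{\mathrm{op}})_{\le A}\big)=\{\sigma\in\mathcal{F}\,\colon\,A\le\bigcap_{H\in\sigma}H\}$. A face $\sigma$ lies in this set exactly when $A\le H$ for every $H\in\sigma$; hence the fiber is precisely the set of all finite non-empty subsets of $S_A:=\{H\in P\,\colon\,A\le H\}$, ordered by inclusion. This is nothing but the face poset of the full simplex on the vertex set $S_A$, which is non-empty as $A\in S_A$. Its order complex is the barycentric subdivision of a (possibly infinite-dimensional) simplex, hence contractible. Since every fiber is contractible, Quillen's Theorem A \cite{Quillen1978} shows that $f$ induces a homotopy equivalence $\Delta(\mathcal{F})\simeq\Delta(P^{\mathrm{op}})=\Delta(P)$, and combining this with $\Delta(\mathcal{F})\simeq K(G)$ completes the proof.

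I expect the only genuinely delicate point to be matching the variance: because $f$ is order-reversing, one must apply the version of the fiber theorem appropriate to $P^{\mathrm{op}}$ and verify that the relevant fibers are the \emph{lower} sets $f^{-1}\big((P^{\mathrm{op}})_{\le A}\big)$, which are simplices, rather than the upper fibers, which need not be contractible. Everything else is routine: the well-definedness of $f$, the subdivision identifications, and the recognition of the fiber as the face poset of a simplex. It is worth noting that faces of $K(G)$ are finite by definition, so the fiber is the face poset of a simplex that is contractible in every case, and no finiteness hypothesis on $G$ is required.
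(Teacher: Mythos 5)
Your argument is correct, and it reaches the conclusion by a different key lemma than the paper. Both proofs begin the same way: pass to the face poset $\mathcal{F}$ of $K(G)$ and use the fact that its order complex is the barycentric subdivision of $K(G)$. From there the paper does \emph{not} use a fiber theorem; instead it exhibits closure operators --- one on $\mathcal{F}$ sending a face to the full simplex $\sigma_K$ determined by a subgroup $K$, and one on the subgroup poset $P$ --- whose images are isomorphic posets, and invokes the fact that closure operators preserve the homotopy type of the order complex. You instead map $\mathcal{F}\to P^{\mathrm{op}}$ by $\sigma\mapsto\bigcap_{H\in\sigma}H$ and verify that each fiber $f^{-1}\bigl((P^{\mathrm{op}})_{\le A}\bigr)$ is the face poset of the full simplex on $\{H\in P\colon A\le H\}$, hence contractible, so Quillen's fiber theorem applies. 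Your fiber computation is right (every finite non-empty collection of proper subgroups containing $A$ is indeed a face, and the fiber is non-empty since $\{A\}$ lies in it), and you are correct to flag the variance issue and to note that no finiteness of $G$ is needed. If anything, your route is the more robust of the two: the paper's closure operator is phrased via ``the intersection of all maximal subgroups containing $H$,'' which presupposes the existence of maximal subgroups and requires a little care to see that it really is a closure operator on all of $\mathcal{F}$ rather than only on the faces of the form $\sigma_H$, whereas your map $\sigma\mapsto\bigcap_{H\in\sigma}H$ is defined on every face with no such caveats. The trade-off is that the closure-operator lemma is more elementary than the fiber theorem; both are standard tools from the same circle of ideas (and, as the paper does with Lemma~\ref{lem:nerve}, the fiber-theorem viewpoint generalizes more readily to the subposet variants $K(\mathcal{S})$ mentioned in the remark following the proposition).
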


\begin{proof}
  Consider the face poset $\mathcal{F}$ of $K(G)$, i.e. the poset of simplices ordered by inclusion. For a proper non-trivial subgroup $H$ of $G$, let $C_H$ be the subset $\mathcal{F}_{\geq \{H\}}$ of $\mathcal{F}$. Then the collection
$$
C := \{C_H\,\colon\, 1 < H < G\}
$$
is an upward closed contractible covering of $\mathcal{F}$, as the singletons $\{H\}$ are exactly the minimal elements of $\mathcal{F}$. Therefore, the order complex of $\mathcal{F}$ and the nerve of $C$ are of the same homotopy type by Lemma~\ref{lem:nerve}. Observe that $\mathcal{N}(C)$ is exactly the intersection complex $K(G)$ of $G$. Since the order complex of $\mathcal{F}$ is the barycentric subdivision of $K(G)$, we see that $K(G)$ is homotopy equivalent to the order complex of the poset of proper non-trivial subgroups of $G$.
\end{proof}

\begin{remarks}
  \mbox{}
  \begin{enumerate}
  \item Intersection complexes can be considered as a special instance of a more general construction in which given a poset $\mathcal{P}$ we form a simplicial complex $K(\mathcal{P})$ by declaring simplices as the subsets of the poset having a well-defined meet. It is easy to see that the above proof can be adapted to work in this frame. Recall that in Section~\ref{sec:pre} we defined $\mathcal{S}_p(G)$ as the set of proper non-trivial $p$-subgroups of $G$. Considered as a poset order complex of $\mathcal{S}_p(G)$ shares the same homotopy type with $K(\mathcal{S}_p(G))$.
  \item An alternative argument to prove Proposition~\ref{prop:homotopy} which is due to Volkmar Welker is as follows: Consider the face poset $\mathcal{F}$ of $K(G)$. By the identification $H\mapsto \sigma_H$, the poset of proper non-trivial subgroups of $G$ becomes a subposet (after reversing the order relation) of $\mathcal{F}$. We want to show that $\mathcal{F}$ and the poset of the proper non-trivial subgroups of $G$ are of the same homotopy type as order complexes. Let $f$ be the map taking a simplex $\sigma$ in $\mathcal{F}$ to $\sigma_K$, where $K$ is the intersection of all maximal subgroups containing the intersection of all the elements in $\sigma$ as subgroups. Then $f$ is a closure operator on $\mathcal{F}$. Let $g$ be the map taking $H$ to $K$, where $K$ is the intersection of all maximal subgroups containing $H$. Then $g$ is a closure operator on the poset of proper non-trivial subgroups of $G$. Since closure operations on posets preserve the homotopy type of the order complex and since the images of $f$ and $g$ are isomorphic by the identification $K\mapsto \sigma_K$, the proof is completed.
  \end{enumerate}
\end{remarks}

Let $G$ be a finite group. For a proper non-trivial subgroup $H$ of $G$, we denote by $V(G)_{\geq H}$ the set of proper subgroups of $G$ containing $H$. Similarly, $V(G)_{\leq H}$ is the set non-trivial subgroups of $G$ contained by $H$. Consider the following collections:
\begin{align*}
  A & := \{V(G)_{\geq H}\,\colon\, H \text{ is a minimal subgroup of } G\} \\
  M & := \{V(G)_{\leq H}\,\colon\, H \text{ is a maximal subgroup of } G\}
\end{align*}
Since $A$ is an upward closed contractible covering of $V(G)$ considered as a poset under set theoretical inclusion and $M$ is a downward closed contractible covering, by Lemma~\ref{lem:nerve} $\mathcal{N}(A), \mathcal{N}(M)$, and the order complex of proper non-trivial subgroups of $G$ share the same homotopy type. Recall that the Frattini subgroup $\Phi(G)$ of $G$ is the intersection of all maximal subgroups of $G$.

\begin{thm}\label{thm:FratDom}
  Let $G$ be a finite group. Then 
  \begin{enumerate}[(i)]
  \item $\Phi(G)\neq 1$ if and only if $\mathcal{N}(M)$ is a simplex. 
  \item $\gamma(G)=1$ if and only if $\mathcal{N}(A)$ is a simplex.
  \end{enumerate}
\end{thm}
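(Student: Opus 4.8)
The plan is to prove each biconditional by unpacking the definitions of $\mathcal{N}(M)$ and $\mathcal{N}(A)$ as nerves and recognizing that ``being a simplex'' is exactly the statement that the single top-dimensional index set is a face, i.e. that a certain total intersection is non-empty. Recall that $\mathcal{N}(M)$ has underlying set indexed by the maximal subgroups of $G$, and a subset $J$ of these indices is a simplex precisely when $\bigcap_{H\in J} V(G)_{\leq H}\neq\varnothing$; dually $\mathcal{N}(A)$ is indexed by the minimal subgroups, with $J$ a simplex exactly when $\bigcap_{A\in J} V(G)_{\geq A}\neq\varnothing$. A simplicial complex on index set $I$ is a simplex (the full simplex on $I$) if and only if the whole set $I$ is itself a face.

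For part (i), I would observe that $\bigcap_{H} V(G)_{\leq H}$, taken over \emph{all} maximal subgroups $H$, is the set of non-trivial subgroups contained in every maximal subgroup, i.e. the set of non-trivial subgroups of $\Phi(G)=\bigcap_{H\in\mathcal{M}(G)}H$. This intersection is non-empty if and only if $\Phi(G)$ itself is a non-trivial subgroup, that is $\Phi(G)\neq 1$. Since $\mathcal{N}(M)$ being a simplex means precisely that the full index set of maximal subgroups forms a face, and a face is present iff the corresponding intersection is non-empty, this gives the equivalence $\Phi(G)\neq 1 \iff \mathcal{N}(M)$ is a simplex. (One should note the convention that if $\mathcal{N}(M)$ is a simplex, every subset of the index set is automatically a face, consistent with the closed-covering setup.)

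For part (ii), the analogous computation is $\bigcap_{A} V(G)_{\geq A}$ over all minimal subgroups $A$, which is the set of proper subgroups of $G$ containing every minimal subgroup, i.e. the set of proper \emph{essential} subgroups in the terminology of Section~\ref{sec:pre}. This intersection is non-empty if and only if there exists a proper subgroup containing $N_G$, equivalently if and only if $N_G$ itself is a proper subgroup of $G$ (since any proper subgroup containing all minimal subgroups contains $N_G$, and conversely $N_G$ is such a subgroup when proper). By Lemma~\ref{lem:dom1}, $N_G$ proper is equivalent to $\gamma(G)=1$. Thus $\mathcal{N}(A)$ is a simplex iff the full minimal-subgroup index set is a face iff $\bigcap_A V(G)_{\geq A}\neq\varnothing$ iff $N_G$ is proper iff $\gamma(G)=1$.

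The main obstacle I anticipate is purely bookkeeping about the nerve convention rather than any substantive difficulty: I must confirm that ``$\mathcal{N}(C)$ is a simplex'' is correctly interpreted as ``the full index set $I$ is a simplex of $\mathcal{N}(C)$'' (equivalently $C_I\neq\varnothing$), and check that the degenerate or small cases are handled—for instance when there is a single maximal (or minimal) subgroup, or when the intersection reduces to a single cone. I would also want to make sure the empty-intersection case maps correctly to the complex failing to be a full simplex, so that the biconditional does not silently break at the boundary. Once this indexing is pinned down, both equivalences follow immediately from the definitions of $\Phi(G)$ and $N_G$ together with Lemma~\ref{lem:dom1}, with no further topology needed beyond the nerve description already set up via Lemma~\ref{lem:nerve}.
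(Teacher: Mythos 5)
Your proposal is correct and follows essentially the same route as the paper: both arguments reduce ``$\mathcal{N}(M)$ (resp.\ $\mathcal{N}(A)$) is a simplex'' to the non-emptiness of the total intersection of the covering sets, identify that intersection with the non-trivial subgroups of $\Phi(G)$ (resp.\ the proper subgroups containing $N_G$), and invoke Lemma~\ref{lem:dom1} for part (ii). Your version merely spells out the nerve bookkeeping that the paper's proof leaves implicit.
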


\begin{proof}
  \emph{Assertion 1.} Clearly, $\mathcal{N}(M)$ is a simplex if and only if for any subset $\mathcal{S}$ of $\mathcal{N}(M)$ intersection of the subgroups in $\mathcal{S}$ is non-trivial which is the case if and only if intersection of all maximal subgroups of $G$ is non-trivial.

  \emph{Assertion 2.} Similar to the previous case $\mathcal{N}(A)$ is a simplex if and only if $N_G$ is a proper subgroup of $G$. The assertion follows from Lemma~\ref{lem:dom1}.
\end{proof}

\begin{rem}
  Using Theorem~\ref{thm:abelian}, one may conclude that for abelian groups $\gamma(G)=1$ \emph{if and only if} $\Phi(G)\neq 1$. Let $P$ be a $p$-group with $p$ being a prime number. It is also true that $\gamma(G)=1$ implies $\Phi(G)\neq 1$; for if $P$ is a non-cyclic $p$-group and $\gamma(P)=1$, then $P/\Phi(P)$ is elementary abelian of rank $>1$ and $\gamma(P/\Phi(P))=p+1$ in that case. However, the converse statement is not true even for $p$-groups. For example, $\Phi(D_8)\cong C_2$ but $\gamma(D_8)=2$.
\end{rem}

Since $\mathcal{N}(A)$ and $K(G)$ are of the same homotopy type by Proposition~\ref{prop:homotopy} and Lemma~\ref{lem:nerve}, as a consequence of Theorem~\ref{thm:FratDom} we have the following

\begin{cor}\label{cor:dom1}
Let $G$ be a finite group. If $\gamma(G)=1$, then $K(G)$ is contractible.  
\end{cor}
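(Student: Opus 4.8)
The plan is to reduce contractibility of $K(G)$ to the purely combinatorial fact that $\mathcal{N}(A)$ is a simplex, and then transport this back along the chain of homotopy equivalences already assembled just before the statement. So the only genuinely topological input I will need is that a simplex is contractible, which is standard (its geometric realization is convex, hence deformation retracts onto any vertex).

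First I would invoke Theorem~\ref{thm:FratDom}(ii), which tells us that the hypothesis $\gamma(G)=1$ is \emph{equivalent} to $\mathcal{N}(A)$ being a simplex, where $A=\{V(G)_{\geq H}\,\colon\, H \text{ minimal}\}$. Unwinding this, $\gamma(G)=1$ forces $N_G$ to be a proper subgroup by Lemma~\ref{lem:dom1}, so every minimal subgroup of $G$ is contained in the single proper subgroup $N_G$; hence the total intersection $\bigcap_H V(G)_{\geq H}$ over all minimal $H$ is non-empty (it contains $N_G$), which is exactly the condition for the nerve to be the full simplex on its vertex set. In particular $\mathcal{N}(A)$ is contractible.

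Finally I would carry contractibility back to $K(G)$: by Proposition~\ref{prop:homotopy}, $K(G)$ is homotopy equivalent to the order complex of the poset of proper non-trivial subgroups of $G$, and by Lemma~\ref{lem:nerve} applied to the upward closed contractible covering $A$ of that poset, the order complex is homotopy equivalent to $\mathcal{N}(A)$. Composing these gives $K(G)\simeq\mathcal{N}(A)$, and since $\mathcal{N}(A)$ is a contractible simplex, $K(G)$ is contractible. I do not expect any real obstacle here, since the substantive work lives in Theorem~\ref{thm:FratDom} and in the two homotopy-equivalence results; the only point requiring a moment's care is confirming that $A$ genuinely meets the hypotheses of Lemma~\ref{lem:nerve} (upward closed, with each finite intersection contractible as an order complex), but this was already checked in the paragraph preceding Theorem~\ref{thm:FratDom}.
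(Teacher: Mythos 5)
Your proposal is correct and follows exactly the paper's route: apply Theorem~\ref{thm:FratDom}(ii) to see that $\mathcal{N}(A)$ is a simplex (hence contractible), and transport contractibility to $K(G)$ via the homotopy equivalences of Proposition~\ref{prop:homotopy} and Lemma~\ref{lem:nerve}. You have merely spelled out the steps that the paper compresses into the sentence preceding the corollary.
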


\section{Acknowledgments}

We thank to the anonymous referee for the invaluable comments and suggestions which greatly enhances the exposition of the paper. In particular, the proof idea of Proposition~\ref{prop:homotopy} is due to the referee.


\newcommand{\etalchar}[1]{$^{#1}$}
\providecommand{\bysame}{\leavevmode\hbox to3em{\hrulefill}\thinspace}
\providecommand{\MR}{\relax\ifhmode\unskip\space\fi MR }
\providecommand{\MRhref}[2]{%
  \href{http://www.ams.org/mathscinet-getitem?mr=#1}{#2}
}
\providecommand{\href}[2]{#2}

\end{document}